\begin{document}
\newtheoremstyle{mytheorem}
  {\topsep}   
  {\topsep}   
  {\itshape}  
  {}       
  {\bfseries} 
  { }         
  {5pt plus 0pt minus 1pt} 
  { }          
\newtheoremstyle{myremark}
  {\topsep}   
  {\topsep}   
  {\upshape}  
  {}       
  {\bfseries} 
  {  }         
  {5pt plus 1pt minus 1pt} 
  { }          
\theoremstyle{mytheorem}
\newtheorem{theorem}{Theorem}[section]
 \newtheorem*{heyde1*}{Theorem A}
  \newtheorem*{heyde2*}{Theorem B}
 \newtheorem{proposition}[theorem]{Proposition}
\newtheorem{lemma}[theorem]{Lemma}
\newtheorem{corollary}[theorem]{Corollary}
\theoremstyle{myremark}
\newtheorem{remark}[theorem]{Remark}
\newtheorem{problem}[theorem]{Problem}
\noindent This article is accepted for publishing in

\noindent Journal of Mathematical Analysis and Applications

\vskip 1 cm
\noindent\textbf{\Large Heyde characterization theorem for some} 

\medskip

\noindent\textbf{\Large  classes of locally compact Abelian groups}

\bigskip

\noindent\textbf{\large Gennadiy Feldman}  

\bigskip

\noindent ORCID ID  https://orcid.org/0000-0001-5163-4079 

\bigskip

\noindent{\textbf{Abstract}} 

\noindent Let $L_1$ and $L_2$ be linear forms of real-valued 
independent random variables. 
By Heyde's theorem, if the conditional distribution of $L_2$ given $L_1$ is symmetric,
then the random variables are Gaussian. A number of papers are devoted 
to generalisation of Heyde's theorem to the case, where independent random variables take 
values in a locally compact Abelian group  $X$. 
The article continues these studies. We consider the case, where $X$ is either a 
totally disconnected group or is of the form
$\mathbb{R}^n\times G$, where $G$ is a totally disconnected 
group  consisting of compact elements. The proof is based on the study of solutions 
of the Heyde functional equation on the character group of the original group.  
In so doing, we use methods of abstract harmonic analysis.

\bigskip

\noindent{\textbf{Key words} Locally compact Abelian group $\cdot$    
Topological automorphism  $\cdot$ Functional equation $\cdot$  
Characterization theorem  $\cdot$  Conditional distribution

\bigskip

\noindent{\textbf{Mathematical Subject Classification (2020)}   39B52 $\cdot$ 43A35  
$\cdot$  60B15  $\cdot$  62E10

\section{Introduction and basic notation}

Let $\alpha_i$, $\beta_i$, $i=1, 2$, 
be   nonzero real numbers satisfying the condition 
$\beta_1\alpha_1^{-1} + \beta_2\alpha_2^{-1}\ne 0$. 
 Let $\xi_1$ and $\xi_2$ be independent random variables. 
 Consider the linear forms $L_1 = \alpha_1\xi_1 +\alpha_2\xi_2$ and
$L_2=\beta_1\xi_1+\beta_2\xi_2$ and suppose that the conditional distribution of
$L_2$ given $L_1$ is symmetric\footnote{It should be noted that
if $\xi$ and $\eta$ are random variables, then  the conditional  
distribution of  $\eta$ given $\xi$ is symmetric
if and only if the random vectors $(\xi, \eta)$ and $(\xi, -\eta)$ are 
identically distributed.}.
By the well-known Heyde  theorem (\!\!\cite{He}, see also 
 \cite[Theorem 13.4.1]{KaLiRa}), $\xi_1$ and $\xi_2$ are Gaussian.

This result stands in line with such classical statements 
as the Kac--Bernstein theorem, where the Gaussian distribution is characterized
by the independence of the sum and difference of independent random variables, and 
the Skitovich--Darmois theorem, where the Gaussian distribution is characterized
by the independence of two linear forms of $n$ independent random variables.
Assertions of this type are called characterization theorems of mathematical statistics. 
The important fact is that for above mentioned theorems, the proof reduces to  
solving 
some functional equations. In the case when random variables take values in a 
locally compact Abelian group  $X$, we get  functional equations 
on the character group of
$X$. These equations and others like them are of great interest 
and have been studied
by many authors 
independently of characterization problems. We also note that 
polynomials on groups play an important role in solving the corresponding 
functional equations
(see, e.g., J.M.~Almira \cite{A1}, \cite{A2}, 
J.M.~Almira and E.V.~Shulman \cite{AS1}, 
M.~Sablik and E.V.~Shulman \cite{SaSh1}, E.V.~Shulman \cite{Sh1}, 
H.~Stetkaer \cite{St1}, \cite{St2}, L.~Sz\'ekelyhidi \cite{Sz1}).

A number of papers are devoted to analogues of Heyde's theorem in 
the case, where independent random variables take values in a locally 
compact Abelian group  $X$ 
(see, e.g., \cite{F_solenoid, 
M2013, H1, H2, JMAA2024} and also  \cite[Chapter IV]{Febooknew}, 
where one can 
find additional references). 
  In doing so, coefficients of the  linear forms are topological 
  automorphisms of $X$. In most articles, the coefficients satisfy 
  some conditions, which can be considered as group 
  analogues of the  condition on the real line. In the articles  
  \cite{F_solenoid,   H1, H2} an analogue  of Heyde's theorem 
  was proved without any restrictions on  the coefficients 
  of the linear forms. In this article we continue these investigations. 
  
Let $\xi_1$ and $\xi_2$ be
independent random variables with values in a locally compact Abelian group  
$X$ and distributions
$\mu_1$ and $\mu_2$. Let $\alpha_j$, $\beta_j$ be topological 
  automorphisms of $X$.
 Assume that  the conditional  distribution of the linear form  
 $L_2 = \beta_1\xi_1  + \beta_2\xi_2$ given $L_1 = \alpha_1\xi_1 +
\alpha_2\xi_2$    is symmetric.
It is easy to see that studying possible distributions $\mu_j$ 
  we can assume without  loss of generality that  
   $L_1=\xi_1+\xi_2$ and $L_2=\xi_1+\alpha\xi_2$, where $\alpha$ is a 
   topological automorphism of $X$.  The following theorem was proved in   
   \cite{H1}, see also \cite[Theorem 9.11]{Febooknew}.
 \begin{heyde1*}  Consider a finite Abelian group $X$  and  let  
 $\alpha$ be an automorphism of $X$. Assume that $X$ contains no elements 
 of order  $2$.
 Let $\xi_1$ and $\xi_2$ be
independent random variables with values in the group  $X$ and distributions
$\mu_1$ and $\mu_2$. Assume that  the characteristic functions 
$\hat\mu_1(y)$ and $\hat\mu_2(y)$ do not vanish.  Put
$L_1 = \xi_1 + \xi_2$ and $L_2 = \xi_1 + 
\alpha\xi_2$.
If the conditional  distribution of $L_2$ given $L_1$ is symmetric, then
$\mu_j$ are shifts of a distribution $\omega$ supported in the subgroup
$\mathrm{Ker}(I+\alpha)$. Furthermore, if
$\eta_j$ are independent identically distributed random variables with values in    
the group $X$  and distribution  $\omega$, then    the conditional  
distribution of the linear form
$N_2=\eta_1 + \alpha\eta_2$ given $N_1=\eta_1 + \eta_2$ is symmetric.
\end{heyde1*}
The purpose of the article is to prove an analogue of Heyde's theorem 
for a wide class of locally compact Abelian groups, without 
imposing any restrictions on the topological automorphism $\alpha$.
The main results of the article are as follows. In Section 2 we 
prove that Theorem A is true for a much wider 
class of groups, namely, for all totally disconnected locally compact 
Abelian groups   containing no elements of order 2 (Theorem \ref{th3}). 
In particular, 
it is true for the additive group of the 
field of $p$-adic numbers $\mathbb{Q}_p$. 
Based on Theorem \ref{th3}, in Section 3 we   prove an analogue of Heyde's 
theorem for independent random variables with values in the group    
$\mathbb{R}^n\times G$, where $G$ is a totally disconnected locally compact 
Abelian group consisting of compact elements and containing no elements 
of order 2 (Theorem \ref{th2}). In all the listed theorems we do not 
impose any restrictions on the topological automorphism $\alpha$. 
In this case the kernel $\mathrm{Ker}(I+\alpha)$
plays a critical role.

We will use in the article the standard results and concepts of abstract harmonic 
analysis (see, e.g., \cite{Hewitt-Ross}) and as well as 
some facts of complex analysis. Let $X$ be a   locally compact 
Abelian group with character group $Y$. We suppose that $X$ is a 
second countable group and consider only such groups without specifying it.  
Let $x\in X$ and $y \in Y$. Denote  by  $(x,y)$ 
the value of the character $y$ at the element  $x$.
Let $H$ be a closed subgroup of the group   $Y$.    Denote by  
$A(X, H) = \{x \in X: (x, y) =1$ for all $y\in
H\}$ the annihilator of $H$. If $\alpha:X\rightarrow X$
is a continuous endomorphism of the group  $X$, we define the adjoint endomorphism
$\widetilde\alpha: Y\rightarrow Y$ as follows  $(\alpha x,
y)=(x, \widetilde\alpha y)$ for all $x\in X$, $y\in
Y$. The group
of all topological automorphisms of the group $X$ we denote by $\mathrm{Aut}(X)$. 
Denote by  $I$ the identity automorphism of a group.   
Observe that $\alpha$ is a topological automorphism of the group   
$X$ if and only if $\widetilde\alpha$  is a topological automorphism 
of the group $Y$. Let $K$ be a closed subgroup of the group $X$ and $\alpha$
be a topological automorphism   
of the group  $X$ such that $\alpha(K)=K$. 
Then the restriction of  $\alpha$ 
 to $K$ is a topological automorphism of the subgroup 
$K$. Denote by  $\alpha_{K}$ this restriction. 
A closed subgroup $G$ of the group $X$ is called characteristic if
$\alpha(G)=G$ for all $\alpha\in\mathrm{Aut}(X)$.
  Put $X^{(2)}=\{2x: x\in X\}$.      
Denote by $\mathbb{C}$ the set of complex numbers.  
If $x, y\in \mathbb{R}^m$, we set   $\langle x, y\rangle=\sum_{j=1}^mx_jy_j$.
We also use this notation   in the case when     $x, y\in \mathbb{C}^m$.

Assume that $f(y)$ is an arbitrary function on  $Y$ and let $h\in Y$.
Define the finite difference operator $\Delta_h$  as follows
$$\Delta_h f(y)=f(y+h)-f(y), \quad y\in Y.$$
A function $f(y)$ on the group    $Y$
is called a polynomial if  $f(y)$
satisfies the equation
$$
 \Delta_{h}^{n+1}f(y)=0, \quad y,h \in Y,
$$
for a nonnegative integer $n$.

Let $\mu$ and $\nu$ be  probability
distributions  on the group $X$. The convolution
$\mu*\nu$ is defined by the formula
$$
\mu*\nu(A)=\int\limits_{X}\mu(A-x)d \nu(x)
$$
for any Borel set $A$. 
The characteristic function of the distribution  $\mu$
is defined as follows
$$
\hat\mu(y) =
\int\limits_{X}(x, y)d \mu(x), \quad y\in Y.$$ 
The support of $\mu$ is denoted by $\sigma(\mu)$.  
For any distribution $\mu$ we will write $\bar \mu $ for the distribution defined by 
$\bar \mu(A) = \mu(-A)$ for any Borel set $A$. 
Let $x\in X$. The degenerate distribution
 concentrated at $x$ is denoted by $E_x$.

\section{ Heyde theorem  for totally disconnected locally compact Abelian groups}

In this section we prove an analogue of Heyde's theorem  for totally disconnected 
locally compact Abelian groups containing no elements of order  2. This theorem 
is a generalization of Theorem A. 
 \begin{theorem}\label{th3} Let $X$ be a totally disconnected locally compact 
 Abelian group containing no elements of order  $2$  and  let  $\alpha$ be a 
 topological automorphism of the group   $X$. Let $\xi_1$ and $\xi_2$ be
independent random variables with values in   $X$ and distributions
$\mu_1$ and $\mu_2$. Assume that  the characteristic functions 
$\hat\mu_1(y)$ and $\hat\mu_2(y)$ do not vanish.  Put
$L_1 = \xi_1 + \xi_2$ and $L_2 = \xi_1 + 
\alpha\xi_2$.
If the conditional  distribution of $L_2$ given $L_1$ is symmetric, then
$\mu_j=\omega*E_{x_j}$, where $\omega$ is   a distribution   supported in 
the subgroup
$\mathrm{Ker}(I+\alpha)$, $x_j\in X$, $j=1, 2$. Furthermore, if
$\eta_j$ are independent identically distributed random variables with values in    
the group $X$  and distribution  $\omega$, then    the conditional  
distribution of the linear form
$N_2=\eta_1 + \alpha\eta_2$ given $N_1=\eta_1 + \eta_2$ is symmetric.
\end{theorem}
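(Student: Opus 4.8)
The plan is to pass to characteristic functions, reduce to the positive case, solve the resulting functional equation on $Y$, and finally dualise. The symmetry hypothesis means that $(L_1,L_2)$ and $(L_1,-L_2)$ are identically distributed; writing out the two-dimensional characteristic function and using the independence of $\xi_1,\xi_2$ together with $(\alpha x,y)=(x,\widetilde\alpha y)$, this is equivalent to the Heyde functional equation
\[
\hat\mu_1(u+v)\,\hat\mu_2(u+\widetilde\alpha v)=\hat\mu_1(u-v)\,\hat\mu_2(u-\widetilde\alpha v),\qquad u,v\in Y. \quad (\star)
\]
Replacing $\mu_j$ by the symmetrisations $\nu_j=\mu_j*\bar\mu_j$ (which preserves $(\star)$ and gives $\widehat{\nu_j}=|\hat\mu_j|^2>0$ by the non-vanishing hypothesis) and taking the real logarithms $\phi_j=\log\widehat{\nu_j}$, continuous, even, with $\phi_j(0)=0$, turns $(\star)$ into
\[
\phi_1(u+v)+\phi_2(u+\widetilde\alpha v)=\phi_1(u-v)+\phi_2(u-\widetilde\alpha v),\qquad u,v\in Y. \quad (\star\star)
\]
Unlike the finite setting of Theorem A, there is in general no reduction to finite groups, so I would work with $(\star\star)$ directly.

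The aim is to prove that each $\phi_j$ is invariant under translations by $\overline{(I+\widetilde\alpha)(Y)}$. Since $\mathrm{Ker}(I+\alpha)=A\bigl(X,\overline{(I+\widetilde\alpha)(Y)}\bigr)$, annihilator duality gives $A\bigl(Y,\mathrm{Ker}(I+\alpha)\bigr)=\overline{(I+\widetilde\alpha)(Y)}$, so this invariance says exactly that $\widehat{\nu_j}$ is constant on the cosets of $A(Y,\mathrm{Ker}(I+\alpha))$, i.e. that $\nu_j$ is concentrated on $K:=\mathrm{Ker}(I+\alpha)$. The structural fact I would use is that $X$ is totally disconnected if and only if every element of $Y$ generates a relatively compact subgroup; consequently every continuous additive real function and every continuous non-negative quadratic form on $Y$ vanishes (on a relatively compact $\overline{\langle y\rangle}$ an additive function has compact, hence trivial, image, while $q(ny)=n^2q(y)$ must stay bounded). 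Thus $Y$ carries no non-constant continuous polynomials --- the group-theoretic counterpart of the absence of non-degenerate Gaussian distributions on $X$.

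The heart of the matter, and the step I expect to be the main obstacle, is to deduce from $(\star\star)$ that for every $h\in Y$ the function $y\mapsto\Delta_{(I+\widetilde\alpha)h}\phi_j(y)$ is a continuous polynomial (of degree at most two) in $y$. I would obtain this by eliminating $\phi_{3-j}$ through repeated application of the finite difference operators $\Delta_w$, as in the group versions of the Skitovich--Darmois and Heyde theorems; the change of variables $p=u+v$, $m=u-v$ is the natural device, but because division by $2$ in $Y$ is only densely defined --- the hypothesis that $X$ has no elements of order $2$ ensures $\overline{2Y}=Y$, although $Y$ itself may carry $2$-torsion --- these manipulations have to be performed on the dense subgroup $2Y$ and extended by continuity. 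Once $\Delta_{(I+\widetilde\alpha)h}\phi_j$ is known to be polynomial, the vanishing of continuous polynomials forces it to be a constant; evaluating at $y=0$ and at $y=-(I+\widetilde\alpha)h$ and using $\phi_j(-y)=\phi_j(y)$ makes this constant equal to its own negative, hence $\Delta_{(I+\widetilde\alpha)h}\phi_j\equiv0$. Therefore $\phi_j$ is $\overline{(I+\widetilde\alpha)(Y)}$-invariant and, dualising, $\sigma(\nu_j)\subseteq K$.

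It remains to return to $\mu_j$ and to treat the converse. Since $\sigma(\mu_j)-\sigma(\mu_j)\subseteq\sigma(\nu_j)\subseteq K$, the support $\sigma(\mu_j)$ lies in a single coset $x_j+K$, so $\mu_j=\omega_j*E_{x_j}$ with $\sigma(\omega_j)\subseteq K$ and $\widehat{\omega_j}$ invariant under $A(Y,K)$. Substituting this form back into $(\star)$ and using the invariance, the equation descends to the quotient $\widehat K=Y/A(Y,K)$, on which $\widetilde\alpha$ induces $-I$ (because $(I+\widetilde\alpha)Y$ maps to $0$); there it collapses to a relation of Kac--Bernstein type forcing $\widehat{\omega_1}/\widehat{\omega_2}$ to be constant, which --- using once more that $K$, like $X$, has no elements of order $2$, so that $2\widehat K$ is dense --- yields $\omega_1=\omega_2$ up to a shift that is absorbed into the $x_j$. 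This gives the required representation $\mu_j=\omega*E_{x_j}$ with $\sigma(\omega)\subseteq K$. The concluding assertion is the easy direction: if $\eta_1,\eta_2$ are independent and identically distributed with distribution $\omega$ supported in $K$, then $\alpha=-I$ on $K$ gives $N_2=\eta_1-\eta_2$ and $N_1=\eta_1+\eta_2$, and interchanging $\eta_1$ and $\eta_2$ leaves $N_1$ fixed while sending $N_2$ to $-N_2$; hence $(N_1,N_2)$ and $(N_1,-N_2)$ are identically distributed and the conditional distribution of $N_2$ given $N_1$ is symmetric.
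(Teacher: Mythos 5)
Your overall strategy is the paper's strategy: symmetrize to $\nu_j=\mu_j*\bar\mu_j$, take logarithms, use finite differences to produce a continuous polynomial, annihilate it using the fact that a group all of whose elements are compact carries no non-constant continuous polynomials (Lemma \ref{lem6}, via \cite[(24.17)]{Hewitt-Ross}), conclude $|\hat\mu_j(y)|=1$ on $H=\overline{(I+\widetilde\alpha)Y}$, and finish on $K=\mathrm{Ker}(I+\alpha)$, where $\alpha$ acts as $-I$, by a Kac--Bernstein-type argument (Lemma \ref{lem3}). The substantive divergence is exactly the step you yourself flag as the main obstacle, and there you have a genuine gap: you propose to difference the Heyde equation $(\star\star)$ directly, but $(\star\star)$ is a \emph{coupled} system in $\phi_1,\phi_2$ (each differencing of $\phi_1$ reintroduces increments of $\phi_2$ and vice versa), and the elimination is asserted rather than carried out; nor do you verify that the increments you accumulate ($2v$, $(I\pm\widetilde\alpha)v$, $2\widetilde\alpha v$, \dots) are dense in the subgroups where you need them. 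The paper sidesteps the coupling entirely: by Lemma \ref{lem8} the symmetry hypothesis forces the independence of $M_1=(I+\alpha)\xi_1+2\alpha\xi_2$ and $M_2=2\xi_1+(I+\alpha)\xi_2$, so by Lemma \ref{lem7} the logarithms satisfy the separated Pexider-type equation $P((I+\widetilde\alpha)u+2v)+Q(2\widetilde\alpha u+(I+\widetilde\alpha)v)=A(u)+B(v)$, in which each of three successive substitutions kills one term and one lands directly on $\Delta_{h_3}\Delta_{2h_2}\Delta_{(I+\widetilde\alpha)h_1}A\equiv0$; the splitting of $A=0$ into $P=0$ and $Q=0$ then uses that both summands are non-positive. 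If you keep your route you must actually perform the elimination; if you adopt the paper's, the whole step becomes routine.

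A second, smaller gap is in the descent to $K$. Writing $\mu_j=\omega_j*E_{x_j}$ with $\sigma(\omega_j)\subseteq K$ and substituting into $(\star)$ leaves a residual character $(2(x_1+\alpha x_2),v)$, so the equation does not pass to the quotient $Y/A(Y,K)$ for free: you must first show $x_1+\alpha x_2\in K$ (this follows from $(\star)$ at $u=v$ together with the $H$-invariance of $\hat\omega_j$ and the density of $Y^{(2)}$) and then re-choose the shift inside the coset to cancel it. The paper packages precisely this bookkeeping into Lemma \ref{lem11}, whose ``furthermore'' clause guarantees that the shifted variables still satisfy the symmetry hypothesis. Your remaining steps --- the coset argument from $\sigma(\mu_j)-\sigma(\mu_j)\subseteq K$, the identification $\omega_1=\omega_2$ up to a shift, and the converse direction --- are correct.
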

To prove Theorem \ref{th3}, we need the following lemmas. We also 
use some of them in the proof of Theorem \ref{th2} in Section 3.

\begin{lemma} [{\protect\!\!\cite[Lemma 3.8]{M2013}}]\label{lem8}   Let   
$X$ be a locally compact Abelian group  and let  $\alpha$   be  a topological 
automorphism  of the group   $X$.
Let $\xi_1$ and $\xi_2$ be
independent random variables with values in   $X$.    
If   the conditional  distribution of the linear form 
$L_2 = \xi_1 + \alpha\xi_2$ given $L_1 = \xi_1 + \xi_2$
is symmetric, then  the linear forms
$M_1=(I+\alpha)\xi_1+2\alpha\xi_2$ and
$M_2=2\xi_1+(I+\alpha)\xi_2$ are independent.
\end{lemma}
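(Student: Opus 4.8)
The plan is to pass to characteristic functions and read off both the hypothesis and the conclusion as functional identities on the character group $Y$. I would first record the characteristic function of the random vector $(L_1,L_2)$ on $Y\times Y$. Using the independence of $\xi_1$ and $\xi_2$ together with $(L_1,u)=(\xi_1,u)(\xi_2,u)$ and $(L_2,v)=(\xi_1,v)(\xi_2,\widetilde\alpha v)$, one finds
$$\varphi(u,v):=\mathbf{E}[(L_1,u)(L_2,v)]=\hat\mu_1(u+v)\,\hat\mu_2(u+\widetilde\alpha v),\qquad u,v\in Y.$$
By the footnote, the symmetry of the conditional distribution of $L_2$ given $L_1$ means that $(L_1,L_2)$ and $(L_1,-L_2)$ are identically distributed, i.e.\ $\varphi(u,v)=\varphi(u,-v)$, which is the Heyde functional equation
\begin{equation}
\hat\mu_1(u+v)\,\hat\mu_2(u+\widetilde\alpha v)=\hat\mu_1(u-v)\,\hat\mu_2(u-\widetilde\alpha v),\qquad u,v\in Y.\tag{$*$}
\end{equation}
On the other side, $M_1$ and $M_2$ are independent if and only if the characteristic function of the vector $(M_1,M_2)$ factorizes into the product of its marginals, because the characteristic function determines a distribution on $X\times X$ and such a distribution is a product measure exactly when its characteristic function factorizes.

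I would then exploit the elementary relations $M_1=\alpha L_1+L_2$ and $M_2=L_1+L_2$, which follow at once from the definitions. They give $(M_1,s)=(L_1,\widetilde\alpha s)(L_2,s)$ and $(M_2,t)=(L_1,t)(L_2,t)$, and hence express the joint characteristic function of $(M_1,M_2)$ through $\varphi$ as
$$\mathbf{E}[(M_1,s)(M_2,t)]=\varphi(\widetilde\alpha s+t,\;s+t),\qquad s,t\in Y.$$
Setting $t=0$ and $s=0$ respectively yields the marginals $\mathbf{E}[(M_1,s)]=\varphi(\widetilde\alpha s,s)$ and $\mathbf{E}[(M_2,t)]=\varphi(t,t)$.

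The heart of the matter is a single application of the symmetry identity $\varphi(u,v)=\varphi(u,-v)$ to each of these three quantities. Taking $u=\widetilde\alpha s+t$, $v=s+t$ collapses the joint function to $\hat\mu_1((\widetilde\alpha-I)s)\,\hat\mu_2((I-\widetilde\alpha)t)$, as the remaining cross terms cancel. Applied to the marginals, the same identity gives $\mathbf{E}[(M_1,s)]=\hat\mu_1((\widetilde\alpha-I)s)$ and $\mathbf{E}[(M_2,t)]=\hat\mu_2((I-\widetilde\alpha)t)$, since in each case one factor reduces to $\hat\mu_j(0)=1$. Comparing the three results gives
$$\mathbf{E}[(M_1,s)(M_2,t)]=\mathbf{E}[(M_1,s)]\,\mathbf{E}[(M_2,t)],\qquad s,t\in Y,$$
which is precisely the independence of $M_1$ and $M_2$.

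I expect no serious obstacle beyond the careful bookkeeping of the adjoint automorphism $\widetilde\alpha$ and the cancellations in the collapse step. The decisive feature to stress is that the whole argument consists of substitutions into the single identity ($*$) and never divides by a characteristic function; this is why the lemma requires no non-vanishing assumption on $\hat\mu_1,\hat\mu_2$ and holds for an arbitrary locally compact Abelian group $X$ and an arbitrary topological automorphism $\alpha$.
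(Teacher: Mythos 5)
Your proposal is correct --- the identities $M_1=\alpha L_1+L_2$ and $M_2=L_1+L_2$ hold, and your three substitutions into the symmetry identity $\varphi(u,v)=\varphi(u,-v)$ do yield the factorization $\mathbf{E}[(M_1,s)(M_2,t)]=\hat\mu_1((\widetilde\alpha-I)s)\,\hat\mu_2((I-\widetilde\alpha)t)=\mathbf{E}[(M_1,s)]\,\mathbf{E}[(M_2,t)]$ --- and it follows essentially the same route as the cited proof: translate the symmetry of the conditional distribution into the Heyde equation (\ref{11.04.1}) and verify independence as a factorization of characteristic functions by substitution. The only difference is presentational: you inline the standard equivalence between independence and the Skitovich--Darmois equation (Lemma \ref{lem7}) by factorizing the joint characteristic function of $(M_1,M_2)$ directly, which also hands you the marginal characteristic functions of $M_1$ and $M_2$ in closed form.
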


\begin{lemma} [{\protect\!\!\cite[Lemma 6.1]{Febooknew}}]\label{lem7}  
 Let   $X$ be a locally compact Abelian group  and let  $\alpha_j$, 
 $\beta_j$, $j=1, 2$, be continuous endomorphisms of the group    $X$. 
 Let $\xi_1$ and $\xi_2$ be
independent random variables with values in   $X$ and distributions
$\mu_1$ and $\mu_2$. 
Then the following statements are equivalent:
\renewcommand{\labelenumi}{\rm(\roman{enumi})}
\begin{enumerate}
  
\item	

the linear forms $L_1 = \alpha_1\xi_1 +
\alpha_2\xi_2$ and $L_2 = \beta_1\xi_1  + \beta_2\xi_2$ are independent; 

\item

the characteristic functions $\hat\mu_j(y)$ satisfy the 
equation\footnote{If we consider linear 
forms $L_1 = \alpha_1\xi_1 +\dots+ \alpha_n\xi_n$ and 
$L_2=\beta_1\xi_1+\dots+\beta_n\xi_n$ of $n$ independent random 
variables, then the characteristic functions $\hat\mu_j(y)$ 
satisfy a similar equation  (\!\!\cite[Equation (6.1)]{Febooknew}).
 It is called the Skitovich--Darmois equation.}
\begin{equation}\label{18.04.1}
\hat\mu_1(\widetilde\alpha_1 u+\widetilde\beta_1 v)\hat\mu_2(\tilde\alpha_2 u
+\widetilde\beta_2 v)=\hat\mu_1(\widetilde\alpha_1
u)\hat\mu_2(\widetilde\alpha_2
u)\hat\mu_1(\widetilde\beta_1 v)\hat\mu_2(\widetilde\beta_2 v),
\quad u, v \in
Y.
\end{equation}
\end{enumerate} 
\end{lemma}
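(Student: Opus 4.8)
The plan is to translate the probabilistic condition of independence into a factorization of a characteristic function on the product group and then to recognize that factorization as equation \eqref{18.04.1}.

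First I would regard the random vector $(L_1, L_2)$ as taking values in the group $X\times X$, whose character group is canonically $Y\times Y$ with pairing $((x_1, x_2), (u, v)) = (x_1, u)(x_2, v)$. Denote by $\nu$ the distribution of $(L_1, L_2)$. Since $\xi_1$ and $\xi_2$ are independent, the distribution of the pair $(\xi_1, \xi_2)$ on $X\times X$ is $\mu_1\times\mu_2$, so $\nu$ is the image of $\mu_1\times\mu_2$ under the continuous homomorphism $(x_1, x_2)\mapsto (\alpha_1 x_1 + \alpha_2 x_2,\ \beta_1 x_1 + \beta_2 x_2)$. The key general fact is that a probability distribution on a product group equals the product of its marginals if and only if its characteristic function factors as the product of the two marginal characteristic functions; this is where the whole argument sits, and I discuss it below. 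Granting it, statement (i) is equivalent to
$$\hat\nu(u, v) = \hat\nu(u, 0)\,\hat\nu(0, v), \quad u, v\in Y.$$

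Next I would compute the three characteristic functions appearing here. Using the definition of the adjoint endomorphism, $(\alpha_j x, u) = (x, \widetilde\alpha_j u)$, together with the multiplicativity of characters, one obtains the identity $((L_1, L_2), (u, v)) = (\xi_1, \widetilde\alpha_1 u + \widetilde\beta_1 v)(\xi_2, \widetilde\alpha_2 u + \widetilde\beta_2 v)$. Integrating against $\mu_1\times\mu_2$ and using its product structure yields
$$\hat\nu(u, v) = \hat\mu_1(\widetilde\alpha_1 u + \widetilde\beta_1 v)\,\hat\mu_2(\widetilde\alpha_2 u + \widetilde\beta_2 v).$$
Specializing to $v = 0$ and to $u = 0$ gives the marginals $\hat\nu(u, 0) = \hat\mu_1(\widetilde\alpha_1 u)\hat\mu_2(\widetilde\alpha_2 u)$ and $\hat\nu(0, v) = \hat\mu_1(\widetilde\beta_1 v)\hat\mu_2(\widetilde\beta_2 v)$. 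Substituting these three expressions into the factorization criterion displayed above produces verbatim equation \eqref{18.04.1}, and conversely \eqref{18.04.1} is exactly this criterion; hence (i) and (ii) are equivalent.

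The main obstacle, and the only step that is not a routine character computation, is justifying the equivalence between independence of $L_1, L_2$ and the factorization $\hat\nu(u, v) = \hat\nu(u, 0)\hat\nu(0, v)$. This rests on the uniqueness theorem for the Fourier transform on the locally compact Abelian group $X\times X$: a probability measure is determined by its characteristic function. Applying uniqueness to $\nu$ and to the product $\lambda_1\times\lambda_2$ of its marginals (whose characteristic function at $(u, v)$ is $\hat\lambda_1(u)\hat\lambda_2(v) = \hat\nu(u, 0)\hat\nu(0, v)$) shows that $\nu = \lambda_1\times\lambda_2$, i.e. that $L_1$ and $L_2$ are independent, precisely when the two characteristic functions agree, i.e. when the factorization holds. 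Second countability of $X$ ensures the relevant measures and the map above are Borel, so no measurability issues arise; with this in hand the lemma follows.
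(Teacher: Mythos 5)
Your proof is correct: the paper states this lemma without proof, citing \cite{Febooknew}, and your argument --- identifying independence of $(L_1,L_2)$ with the factorization $\hat\nu(u,v)=\hat\nu(u,0)\hat\nu(0,v)$ via the uniqueness theorem for characteristic functions on $X\times X$, then computing $\hat\nu$ through the adjoint endomorphisms --- is precisely the standard proof given in that reference. Nothing is missing; the adjoint computation and the appeal to Fourier uniqueness are exactly the two ingredients needed.
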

\begin{lemma}[{\protect\!\!\cite[(24.22)]{Hewitt-Ross}}]\label{lem9} 
 Let   
$X$ be a locally compact Abelian group.    The subgroup $Y^{(2)}$ 
is dense in the group $Y$  if and only if the group    $X$ contains no 
elements of order   $2$.
\end{lemma}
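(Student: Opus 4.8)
The plan is to translate the density assertion into a statement about annihilators and then compute the relevant annihilator by hand. I would use two standard facts from Pontryagin duality (available in Hewitt--Ross): for any subgroup $S$ of $Y$ one has $A(X, S) = A(X, \overline{S})$, and a closed subgroup $H$ of $Y$ equals $Y$ if and only if $A(X, H) = \{0\}$. Combining these, $Y^{(2)}$ is dense in $Y$ if and only if its closure is all of $Y$, that is, if and only if $A(X, Y^{(2)}) = \{0\}$.

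The key computation is to identify $A(X, Y^{(2)})$. For $x \in X$ and $y \in Y$, bilinearity of the pairing gives $(x, 2y) = (x, y)^2 = (2x, y)$. Hence $x \in A(X, Y^{(2)})$ precisely when $(2x, y) = 1$ for every $y \in Y$, and since the characters of $X$ separate its points, this happens if and only if $2x = 0$. Therefore $A(X, Y^{(2)})$ is exactly the subgroup $\{x \in X : 2x = 0\}$ of elements of order dividing $2$.

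Putting the two steps together, $Y^{(2)}$ is dense in $Y$ if and only if $\{x \in X : 2x = 0\} = \{0\}$, which is precisely the condition that $X$ contain no elements of order $2$, and this establishes both implications simultaneously. There is no substantial obstacle here: the argument is a direct application of the duality dictionary together with the elementary identity $(x, 2y) = (2x, y)$. The only point requiring a little care is that $Y^{(2)}$ need not be closed, so one must first pass to its closure and use that annihilators are insensitive to closure before invoking the closed-subgroup version of the duality criterion.
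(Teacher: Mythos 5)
Your argument is correct: the reduction of density to $A(X, Y^{(2)})=\{0\}$ via the facts $A(X,S)=A(X,\overline{S})$ and the closed-subgroup duality criterion, followed by the computation $A(X,Y^{(2)})=\{x\in X: 2x=0\}$ using $(x,2y)=(2x,y)$ and separation of points, is exactly the standard annihilator proof of this fact. The paper itself gives no proof but cites \cite[(24.22)]{Hewitt-Ross}, where the result is established by essentially this same duality computation, so your proposal matches the intended argument.
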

\begin{lemma}[{\protect see, e.g., \cite[Proposition  1.30]{Febooknew}}]\label{lem6}  
Let $Y$ be a locally compact Abelian group such that all elements of 
the group $Y$ are compact. Let $P(y)$ be a continuous polynomial on  $Y$. Then   
$P(y)=const$.
\end{lemma}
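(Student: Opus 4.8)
The plan is to reduce the statement to a one-variable problem by restricting $P$ to the cyclic subgroups generated by individual elements, and then to exploit the hypothesis that every element is compact to force boundedness along each such subgroup. Fix $h\in Y$ arbitrary and consider the function of an integer variable $g(t)=P(th)$, $t\in\mathbb{Z}$. First I would relate the ordinary forward difference $\Delta g(t)=g(t+1)-g(t)$ to the operator $\Delta_h$ from the definition of polynomial: since $\Delta g(t)=P((t+1)h)-P(th)=\Delta_h P(th)$, an easy induction gives $\Delta^k g(t)=\Delta_h^k P(th)$ for every $k\geq 0$. By hypothesis $P$ is a polynomial, so $\Delta_h^{n+1}P(y)=0$ for all $y$, and in particular $\Delta^{n+1}g(t)=0$ for every $t\in\mathbb{Z}$. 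It is a standard fact that a function on $\mathbb{Z}$ annihilated by $\Delta^{n+1}$ agrees with a genuine polynomial in $t$ of degree at most $n$ (for instance via the Newton expansion $g(t)=\sum_{k=0}^{n}\binom{t}{k}\Delta^k g(0)$), so $g$ extends to a polynomial function $q(t)$ with complex coefficients.

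Next I would bring in the compactness hypothesis. Since $h$ is a compact element of $Y$, the closure $K_h$ of the cyclic subgroup $\langle h\rangle=\{th:t\in\mathbb{Z}\}$ is a compact subgroup of $Y$. As $P$ is continuous and $K_h$ is compact, the restriction $P|_{K_h}$ is bounded; in particular the sequence $g(t)=P(th)$ is bounded over all $t\in\mathbb{Z}$, because $th\in K_h$ for every $t$. Thus the polynomial $q(t)$ obtained above is bounded on $\mathbb{Z}$, and since a nonconstant complex polynomial satisfies $|q(t)|\to\infty$ as $|t|\to\infty$, the polynomial $q$ must be constant. Hence $g(t)\equiv g(0)=P(0)$; evaluating at $t=1$ yields $P(h)=P(0)$.

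Because $h\in Y$ was arbitrary, this shows $P(y)=P(0)$ for all $y\in Y$, so $P$ is constant, as required. The argument is essentially a transfer of the classical "bounded polynomial is constant" principle from $\mathbb{Z}$ to $Y$, and the step I expect to require the most care is the bookkeeping identity $\Delta^k g(t)=\Delta_h^k P(th)$ together with the justification that $\Delta^{n+1}g\equiv 0$ genuinely forces $g$ to be a polynomial on all of $\mathbb{Z}$; the conceptual crux, however, is the use of compactness of $K_h$, since without it a direction $h$ generating a noncompact subgroup could support an unbounded genuine polynomial and the conclusion would fail.
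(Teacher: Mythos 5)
Your proof is correct. The paper itself contains no argument for this lemma---it is quoted with a citation to \cite[Proposition 1.30]{Febooknew}---and your route (restrict $P$ to the compact monothetic subgroup $\overline{\langle h\rangle}$ generated by a compact element $h$, observe that $g(t)=P(th)$ is bounded on $\mathbb{Z}$ and satisfies $\Delta^{n+1}g\equiv 0$, hence is a bounded and therefore constant polynomial, so $P(h)=P(0)$ for every $h$) is precisely the standard proof of that cited proposition, with the delicate points (the identity $\Delta^{k}g(t)=\Delta_h^{k}P(th)$, the validity of the Newton expansion on all of $\mathbb{Z}$, and the boundedness coming from continuity of $P$ on the compact set $\overline{\langle h\rangle}$) all handled correctly.
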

\begin{lemma}[{\protect\!\!\cite[Lemma 2.5]{H1}, see also 
\cite[Lemma 9.10]{Febooknew}}]\label{lem11}
 Let $X$ be a locally compact Abelian group  and
 let $H$ be a closed subgroup of the group  $Y$ such that the subgroup 
 $H^{(2)}$ is dense in   $H$.   
 Let  $\alpha$ be a topological automorphism of the group   $X$ such that 
 $\widetilde\alpha(H)=H$.
Let
$\xi_1$ and  $\xi_2$  be independent random variables with values in
 $X$  and distributions $\mu_1$ and $\mu_2$ such that
 \begin{equation}\label{19.04.1}
|\hat\mu_1(y)|=|\hat\mu_2(y)|=1, \quad y\in H.
\end{equation}
If the conditional  distribution of the linear form 
$L_2 = \xi_1 + \alpha\xi_2$ given $L_1 = \xi_1 + \xi_2$ is 
symmetric, then there are some shifts $\lambda_j$ of the 
distributions $\mu_j$  such that the distributions $\lambda_j$ are supported in 
$A(X, H)$. Furthermore, if $\eta_j$ are independent random variables with 
values in the group  $X$  and distributions $\lambda_j$, then    the 
conditional  distribution of the linear form
$N_2=\eta_1 + \alpha\eta_2$ given $N_1=\eta_1 + \eta_2$ is symmetric.
\end{lemma}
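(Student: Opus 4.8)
The plan is to convert the symmetry hypothesis into a functional equation for the characteristic functions, to read off the shifts directly from \eqref{19.04.1}, and finally to check that the shifts can be chosen so that the same equation persists for the $\lambda_j$. First I would record the analytic form of the symmetry condition. By the characterization recalled in the footnote of the introduction, the conditional distribution of $L_2=\xi_1+\alpha\xi_2$ given $L_1=\xi_1+\xi_2$ is symmetric if and only if the distributions of $(L_1,L_2)$ and $(L_1,-L_2)$ on $X\times X$ coincide. Computing the joint characteristic function at $(u,v)\in Y\times Y$ and invoking the uniqueness theorem for characteristic functions, this is equivalent to the Heyde equation
\begin{equation}
\hat\mu_1(u+v)\hat\mu_2(u+\widetilde\alpha v)=\hat\mu_1(u-v)\hat\mu_2(u-\widetilde\alpha v),\quad u,v\in Y.\label{eq:heydemu}
\end{equation}
The same computation applied to $\eta_j$ shows that the desired conclusion about $N_1,N_2$ is equivalent to \eqref{eq:heydemu} with $\hat\mu_j$ replaced by $\hat\lambda_j$. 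Since $\widetilde\alpha(H)=H$, inserting $u,v\in H$ keeps all four arguments $u\pm v$, $u\pm\widetilde\alpha v$ inside $H$; I will use this repeatedly.

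Next I would produce the shifts. Fix $y\in H$. Since $|\hat\mu_j(y)|=1$ and $(x,y)$ is unimodular, the equality case in the triangle inequality forces $(x,y)=\hat\mu_j(y)$ for $\mu_j$-almost all $x$. Intersecting the two full-measure sets attached to a pair of elements of $H$ and multiplying shows that $\hat\mu_j$ is multiplicative on $H$; being continuous and unimodular there, $\hat\mu_j|_H$ is a character of $H$. As the restriction map $X=\widehat Y\to\widehat H$ is surjective with kernel $A(X,H)$, there are $x_j\in X$ with $(x_j,y)=\hat\mu_j(y)$ for all $y\in H$. Putting $\lambda_j=\mu_j*E_{-x_j}$ gives $\hat\lambda_j(y)=\hat\mu_j(y)\overline{(x_j,y)}=1$ for $y\in H$, whence $\sigma(\lambda_j)\subseteq A(X,H)$. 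This already yields the first assertion, and notably it does not use the symmetry hypothesis.

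It remains to arrange the symmetry of $N_2$ given $N_1$. Substituting $\hat\mu_j=\hat\lambda_j\cdot(x_j,\cdot)$ into \eqref{eq:heydemu}, the character factors collect into $(x_1+x_2,u)(x_1+\alpha x_2,v)$ on the left and $(x_1+x_2,u)\,\overline{(x_1+\alpha x_2,v)}$ on the right; cancelling $(x_1+x_2,u)$ leaves
\begin{equation}
\hat\lambda_1(u+v)\hat\lambda_2(u+\widetilde\alpha v)\,(x_1+\alpha x_2,v)=\hat\lambda_1(u-v)\hat\lambda_2(u-\widetilde\alpha v)\,\overline{(x_1+\alpha x_2,v)}.\label{eq:heydelam}
\end{equation}
Restricting \eqref{eq:heydelam} to $u,v\in H$, where every $\hat\lambda_j$ factor equals $1$ by the previous paragraph, forces $(x_1+\alpha x_2,v)^2=1$, i.e. $(x_1+\alpha x_2,2v)=1$, for all $v\in H$; hence $x_1+\alpha x_2\in A(X,H^{(2)})$. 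Because $H^{(2)}$ is dense in $H$ and annihilators are closed, $A(X,H^{(2)})=A(X,H)$, so $x_1+\alpha x_2\in A(X,H)$.

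Finally I would exploit the remaining freedom in the shifts. From $\widetilde\alpha(H)=H$ one gets $\alpha\bigl(A(X,H)\bigr)=A(X,H)$, and since $x_1+\alpha x_2\in A(X,H)$ I may replace $x_1$ by $x_1-(x_1+\alpha x_2)=-\alpha x_2$; this alters $\lambda_1$ only by a shift by an element of $A(X,H)$, so both $\sigma(\lambda_1)\subseteq A(X,H)$ and $\hat\lambda_1\equiv1$ on $H$ persist, while now $x_1+\alpha x_2=0$. With this choice the unimodular factors in \eqref{eq:heydelam} are identically $1$, so \eqref{eq:heydelam} becomes the Heyde equation for $\hat\lambda_1,\hat\lambda_2$ on all of $Y$, which by the first paragraph is precisely the symmetry of $N_2=\eta_1+\alpha\eta_2$ given $N_1=\eta_1+\eta_2$. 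The main obstacle is the penultimate step, namely showing that $x_1+\alpha x_2$ lies in $A(X,H)$ and not merely in the possibly larger group $A(X,H^{(2)})$: this is exactly where the hypothesis that $H^{(2)}$ be dense in $H$ is indispensable (by Lemma \ref{lem9} it amounts to the group dual to $H$ having no elements of order $2$), for without it the surviving unimodular factors in \eqref{eq:heydelam} cannot be absorbed into an admissible shift.
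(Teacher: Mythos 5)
Your proof is correct and follows essentially the same route as the proof in the cited sources \cite[Lemma 2.5]{H1}, \cite[Lemma 9.10]{Febooknew} (the paper itself quotes the lemma without reproving it): one extracts characters $(x_j,y)=\hat\mu_j(y)$, $y\in H$, from the condition $|\hat\mu_j(y)|=1$ on $H$, substitutes $\hat\mu_j(y)=\hat\lambda_j(y)(x_j,y)$ into the Heyde equation of Lemma \ref{lem1}, restricts to $u,v\in H$ (legitimate since $\widetilde\alpha(H)=H$) to obtain $(x_1+\alpha x_2,y)=1$ for $y\in H^{(2)}$, and uses the density of $H^{(2)}$ in $H$ to conclude $x_1+\alpha x_2\in A(X,H)$, after which the corrected shift $\lambda_1\mapsto\lambda_1*E_{x_1+\alpha x_2}$ makes the residual unimodular factor vanish identically and yields the Heyde equation for the $\hat\lambda_j$ on all of $Y$. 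All the individual steps (the equality case in the triangle inequality, multiplicativity of $\hat\mu_j$ on $H$, extension of characters of the closed subgroup $H$ to characters of $Y$ realized by elements of $X$, and the equivalence $\hat\lambda\equiv 1$ on $H$ iff $\sigma(\lambda)\subset A(X,H)$) are sound, and you correctly identify the density of $H^{(2)}$ in $H$ as the point where the hypothesis is indispensable.
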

\begin{lemma}[{\protect\!\!\cite[Lemma 9.5]{Febooknew}}]\label{lem3} 
Let $X$ be a locally compact Abelian group containing no elements of order  $2$.
Let $\xi_1$ and $\xi_2$ be
independent random variables with values in   $X$ and distributions
$\mu_1$ and $\mu_2$.  The conditional  distribution of the linear form $L_2 = 
\xi_1 -\xi_2$  given $L_1 = \xi_1 + \xi_2$
is symmetric if and only if $\mu_1=\mu_2=\omega$, where $\omega$ 
is an arbitrary distribution.
\end{lemma}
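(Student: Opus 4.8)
The plan is to characterize symmetry of the conditional distribution through the joint characteristic function and to reduce the statement to a single functional equation. By the footnote in the introduction, the conditional distribution of $L_2$ given $L_1$ is symmetric if and only if the random vectors $(L_1, L_2)$ and $(L_1, -L_2)$ are identically distributed. To exploit this, first I would compute the characteristic functions of these vectors on the character group $Y\times Y$ of $X\times X$. Using $(\xi_1+\xi_2, u)=(\xi_1,u)(\xi_2,u)$, the identity $(-x,y)=(x,-y)$, and the independence of $\xi_1$ and $\xi_2$, one gets that the characteristic function of $(L_1,L_2)$ at $(u,v)$ equals $\hat\mu_1(u+v)\hat\mu_2(u-v)$, while that of $(L_1,-L_2)$ equals $\hat\mu_1(u-v)\hat\mu_2(u+v)$. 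By the uniqueness theorem for characteristic functions of distributions on $X\times X$, the symmetry condition is therefore equivalent to the functional equation
\[
\hat\mu_1(u+v)\,\hat\mu_2(u-v)=\hat\mu_1(u-v)\,\hat\mu_2(u+v),\qquad u, v\in Y.
\]

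The sufficiency (``if'') direction is then immediate: if $\mu_1=\mu_2=\omega$, both sides of the equation equal $\hat\omega(u+v)\hat\omega(u-v)$, so the equation holds for all $u,v\in Y$; taking $\xi_1,\xi_2$ to be independent with common distribution $\omega$ shows that the corresponding conditional distribution is symmetric.

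For the necessity (``only if'') direction, the key observation is that a single substitution collapses the equation. Substituting $v=u$ and using $\hat\mu_j(0)=1$, I would obtain $\hat\mu_1(2u)=\hat\mu_2(2u)$ for every $u\in Y$, that is, $\hat\mu_1$ and $\hat\mu_2$ coincide on the subgroup $Y^{(2)}$. Since $X$ contains no elements of order $2$, Lemma \ref{lem9} guarantees that $Y^{(2)}$ is dense in $Y$; the continuity of characteristic functions then upgrades this to $\hat\mu_1(y)=\hat\mu_2(y)$ for all $y\in Y$, and the uniqueness theorem gives $\mu_1=\mu_2=:\omega$.

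I do not expect a serious obstacle here: the content is entirely in setting up the functional equation correctly (the bookkeeping of the two joint characteristic functions) and in recognizing that the substitution $v=u$ suffices. The one place where the hypothesis is genuinely used is the passage from agreement on $Y^{(2)}$ to agreement on all of $Y$; this is exactly where ``$X$ contains no elements of order $2$'' is indispensable, via Lemma \ref{lem9}, since without it one could only conclude that $\hat\mu_1$ and $\hat\mu_2$ agree on $Y^{(2)}$.
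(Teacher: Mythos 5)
Your proof is correct and follows essentially the same route as the paper's cited source (\!\!\cite[Lemma 9.5]{Febooknew}): the symmetry hypothesis is translated into the Heyde equation $\hat\mu_1(u+v)\hat\mu_2(u-v)=\hat\mu_1(u-v)\hat\mu_2(u+v)$ (which is exactly the case $\alpha=-I$ of Lemma \ref{lem1}), the substitution $v=u$ yields $\hat\mu_1=\hat\mu_2$ on $Y^{(2)}$, and density of $Y^{(2)}$ (Lemma \ref{lem9}) together with continuity of characteristic functions and the uniqueness theorem completes the necessity, while sufficiency is the immediate verification you give. No gaps.
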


\begin{lemma}[{\protect\!\!\cite[Lemma 9.1]{Febooknew}}]
\label{lem1} Let   $X$ be a locally compact Abelian group  and let  $\alpha_j$, 
 $\beta_j$, $j=1, 2$, be continuous endomorphisms of the group    $X$.
Let
$\xi_1$ and  $\xi_2$  be independent random variables with values in
 $X$  and distributions $\mu_1$ and $\mu_2$.  
Then the following statements are equivalent:
\renewcommand{\labelenumi}{\rm(\roman{enumi})}
\begin{enumerate}
  
\item	

the conditional  distribution of the linear form 
$L_2 = \beta_1\xi_1  + \beta_2\xi_2$  given $L_1 = \alpha_1\xi_1 +
\alpha_2\xi_2$ is symmetric; 

\item

the characteristic functions $\hat\mu_j(y)$ satisfy 
the equation\footnote{If we consider linear 
forms $L_1 = \alpha_1\xi_1 +\dots+ \alpha_n\xi_n$ and 
$L_2=\beta_1\xi_1+\dots+\beta_n\xi_n$ of $n$ independent random 
variables, then the characteristic functions $\hat\mu_j(y)$ 
satisfy a similar equation  (\!\!\cite[Equation (9.1)]{Febooknew}).
 It is called the Heyde equation.}
\begin{equation*}
\hat\mu_1(\widetilde\alpha_1u+\widetilde\beta_1v)
\hat\mu_2(\widetilde\alpha_2u+\widetilde\beta_2v)=
\hat\mu_1(\widetilde\alpha_1u-\widetilde\beta_1v)
\hat\mu_2(\widetilde\alpha_2u-\widetilde\beta_2v), \quad u, v \in Y.
\end{equation*}
In particular, if   $L_1 = \xi_1 + \xi_2$ and $L_2 = \xi_1 + \alpha\xi_2$,
where $\alpha$ is a continuous endomorphism of the group $X$,
this equation takes the  form
\begin{equation}\label{11.04.1}
\hat\mu_1(u+v )\hat\mu_2(u+\widetilde\alpha v )=
\hat\mu_1(u-v )\hat\mu_2(u-\widetilde\alpha v), \quad u, v \in Y.
\end{equation}
\end{enumerate} 
\end{lemma}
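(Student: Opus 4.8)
The plan is to translate the probabilistic symmetry condition (i) into the analytic identity (ii) by passing to the characteristic functions of the joint distribution of the pair $(L_1, L_2)$ on the product group $X\times X$. The starting point is the criterion recorded in the footnote of Section 1: for random variables $\xi$ and $\eta$, the conditional distribution of $\eta$ given $\xi$ is symmetric if and only if the random vectors $(\xi, \eta)$ and $(\xi, -\eta)$ are identically distributed. Applying this with $\xi = L_1$ and $\eta = L_2$, statement (i) becomes equivalent to the assertion that the $X\times X$-valued vectors $(L_1, L_2)$ and $(L_1, -L_2)$ have the same distribution. Since the character group of $X\times X$ is naturally identified with $Y\times Y$, the uniqueness theorem for characteristic functions reduces this equality of distributions to the coincidence of their characteristic functions at every point $(u, v)\in Y\times Y$.

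First I would compute the characteristic function of $(L_1, L_2)$. As $\xi_1$ and $\xi_2$ are independent, the joint distribution of $(\xi_1, \xi_2)$ is the product measure $\mu_1\times\mu_2$, so the characteristic function of $(L_1, L_2)$ at $(u, v)$ equals
\begin{equation*}
\int_{X}\int_{X}(\alpha_1 x_1 + \alpha_2 x_2, u)(\beta_1 x_1 + \beta_2 x_2, v)\, d\mu_1(x_1)\, d\mu_2(x_2).
\end{equation*}
Using the defining property of the adjoint endomorphisms, $(\alpha_j x, u) = (x, \widetilde\alpha_j u)$ and $(\beta_j x, v) = (x, \widetilde\beta_j v)$, the integrand factors as $(x_1, \widetilde\alpha_1 u + \widetilde\beta_1 v)(x_2, \widetilde\alpha_2 u + \widetilde\beta_2 v)$, and integrating against the product measure yields $\hat\mu_1(\widetilde\alpha_1 u + \widetilde\beta_1 v)\hat\mu_2(\widetilde\alpha_2 u + \widetilde\beta_2 v)$, i.e. the left-hand side of (ii).

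Next I would treat $(L_1, -L_2)$ in the same way. The only extra ingredient is the elementary identity $(-x, y) = (x, -y)$ for characters, which lets me replace $v$ by $-v$ in the $\beta$-terms; the computation then gives $\hat\mu_1(\widetilde\alpha_1 u - \widetilde\beta_1 v)\hat\mu_2(\widetilde\alpha_2 u - \widetilde\beta_2 v)$, the right-hand side of (ii). Equating the two characteristic functions for all $(u,v)\in Y\times Y$ produces precisely equation (ii), and since each step is an equivalence the converse direction comes for free. The ``In particular'' assertion is then immediate: setting $\alpha_1 = \alpha_2 = \beta_1 = I$ and $\beta_2 = \alpha$, so that $\widetilde\alpha_1 = \widetilde\alpha_2 = \widetilde\beta_1 = I$ and $\widetilde\beta_2 = \widetilde\alpha$, reduces (ii) to equation \eqref{11.04.1}.

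There is no deep obstacle here; the entire content is bookkeeping. The points that require care are the correct identification of $(X\times X)^\wedge$ with $Y\times Y$, so that the uniqueness theorem applies to the joint distributions, and the systematic use of the adjoint relation to transfer the endomorphisms off the random variables and onto the characters. I would state the uniqueness theorem explicitly (two distributions on a locally compact Abelian group coincide if and only if their characteristic functions coincide), since it is the single nonelementary fact on which the argument rests.
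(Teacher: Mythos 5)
Your proof is correct; the paper itself cites this lemma from \cite[Lemma 9.1]{Febooknew} without reproducing an argument, and your route --- reducing the symmetry of the conditional distribution of $L_2$ given $L_1$ to the equality in distribution of $(L_1,L_2)$ and $(L_1,-L_2)$ via the criterion in the paper's footnote, identifying the character group of $X\times X$ with $Y\times Y$, and computing both joint characteristic functions with the adjoint relation $(\alpha x,y)=(x,\widetilde\alpha y)$ before invoking the uniqueness theorem --- is exactly the standard proof given in that reference. The specialization $\alpha_1=\alpha_2=\beta_1=I$, $\beta_2=\alpha$ yielding equation (\ref{11.04.1}) is handled correctly as well, so nothing is missing.
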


\textit{Proof of Theorem $\ref{th3}$} 
By Lemma  \ref{lem8}, the symmetry of the conditional  distribution 
of the linear form
 $L_2$ given
  $L_1$ implies that the linear forms
$M_1=(I+\alpha)\xi_1+2\alpha\xi_2$ and
$M_2=2\xi_1+(I+\alpha)\xi_2$ are independent.
By Lemma  \ref{lem7}, it follows from this that  the characteristic functions
$\hat\mu_j(y)$ satisfy equation $(\ref{18.04.1})$ which takes the form
\begin{equation}\label{18.04.2}
\hat\mu_1((I+\widetilde\alpha) u+2 v)\hat\mu_2(2\widetilde\alpha  
u+(I+\widetilde\alpha) v)=\hat\mu_1((I+\widetilde\alpha)
u)\hat\mu_2(2\widetilde\alpha
u)\hat\mu_1(2 v)\hat\mu_2((I+\widetilde\alpha) v), \quad u, v \in
Y.
\end{equation}
Put $\nu_j = \mu_j* \bar \mu_j$. Then  
$\hat \nu_j(y) = |\hat \mu_j(y)|^2 > 0$  for all $y \in Y$, $j=1, 2$.
Obviously, the characteristic functions
 $\hat \nu_j(y)$
also satisfy equation $(\ref{18.04.2})$. Set 
$$
P(y)=\log\hat \nu_1(y), \quad Q(y)=\log\hat\nu_2(y).
$$  
It follows from equation (\ref{18.04.2}) for the   characteristic functions 
$\hat\nu_j(y)$ that
\begin{equation}\label{18.04.3}
P((I+\widetilde\alpha) u+2 v)+Q(2\widetilde\alpha  
u+(I+\widetilde\alpha) v)=A(u)+B(v), \quad u, v \in
Y,
\end{equation}
where
\begin{equation}\label{18.04.4}
A(y)=P((I+\widetilde\alpha)y)+Q(2\widetilde\alpha y), 
\quad B(y)=P(2y)+Q((I+\widetilde\alpha) y), \quad y\in Y.
\end{equation}
Equation (\ref{18.04.3}) with certain restrictions on the topological 
automorphism  $\alpha$  arises in the study of    
Heyde's theorem on various locally compact Abelian groups. 
 We solve equation   (\ref{18.04.3}) using the finite 
difference method. This is a standard reasoning.
At the same time, it is important for us that we solve equation 
 (\ref{18.04.3}) on  the character group of a totally disconnected locally compact 
 Abelian group containing no elements of order  $2$.

Take an arbitrary element $h_1$ of the group $Y$.
Substitute $u+(I+\widetilde\alpha) h_1$ instead of $u$ 
and $v-2\widetilde\alpha  h_1$ instead   of $v$ into
 (\ref{18.04.3}). Subtracting equation
  (\ref{18.04.3}) from the resulting equation, we get
  \begin{equation}\label{18.04.5}
    \Delta_{(I-\widetilde\alpha)^2 h_1}{P((I+\widetilde\alpha) u+2 v)}
    =\Delta_{(I+\widetilde\alpha) h_1} A(u)+\Delta_{-2\widetilde\alpha  h_1} B(v),
\quad u, v\in Y.
\end{equation}
Take an arbitrary element $h_2$ of the group $Y$.
Substitute  $u+2h_{2}$ instead of $u$ and
$v-(I+\widetilde\alpha)h_{2}$ instead of $v$ into (\ref{18.04.5}). 
Subtracting equation
  (\ref{18.04.5}) from the obtained  equation, we find
 \begin{equation}\label{18.04.6}
     \Delta_{2 h_2}\Delta_{(I+\widetilde\alpha) h_1} 
     A(u)+\Delta_{-(I+\widetilde\alpha) h_2}\Delta_{-2\widetilde\alpha  h_1} B(v)=0,
\quad u, v\in Y.
\end{equation}
Take an arbitrary element $h_3$ of the group $Y$.
Substitute $u+h_3$ instead of $u$ into
(\ref{18.04.6}). Subtracting equation
  (\ref{18.04.6}) from the resulting  equation, we receive
 \begin{equation}\label{18.04.7}
   \Delta_{h_3}\Delta_{2 h_2}\Delta_{(I+\widetilde\alpha) h_1} A(u)=0,
\quad u\in Y.
\end{equation}

Put $H=\overline{(I+\widetilde\alpha)Y}$. We will prove that    
(\ref{19.04.1}) is fulfilled. The group $X$ contains no elements of order   
2. By Lemma \ref{lem9}, the subgroup $Y^{(2)}$ is dense in the group $Y$. 
Taking this into account 
and taking into account that
$h_j$ are arbitrary elements of the group  $Y$, it follows from
(\ref{18.04.7}) that the function   $A(y)$ satisfies the equation
$$
\Delta_h^{3} A(y) = 0, \quad y, h  \in H,
$$
i.e.,   $A(y)$ is a continuous polynomial on the subgroup    $H$. Since $X$ 
is a totally disconnected locally compact Abelian group, all elements 
of the group $Y$   are compact (\!\!\cite[(24.17)]{Hewitt-Ross}). 
Hence  all elements of the subgroup $H$ are also compact. Notice  that 
$A(0)=0$. By Lemma \ref{lem6}, $A(y)=0$ for all $y\in H$. Then 
(\ref{18.04.4}) implies that
\begin{equation}\label{09.05.1}
Q(2\widetilde\alpha y)=0, \quad y\in H.
\end{equation}

Put $K=\mathrm{Ker}(I+\alpha)$. The factor group $X/K$
contains no elements of order 2. Indeed, let $[x]\in X/K$ and $2[x]=0$. 
Then $[2x]\in K$, i.e., $(I+\alpha)2x=0$ and hence $2(I+\alpha)x=0$. 
Since the group $X$ contains no elements of order 2, this implies that
$(I+\alpha)x=0$. We obtain that $x\in K$ and $[x]=0$. It is obvious that   
$K=A(X, H)$ and hence $H=A(Y, K)$. The subgroup $H$ is topologically isomorphic
to the character group of the factor group $X/K$. Since $X/K$
contains no elements of order 2, by Lemma \ref{lem9}, the subgroup $H^{(2)}$ 
is dense in   $H$.
 Taking into account that $\widetilde\alpha$ is a topological automorphism 
 of the group  $Y$ and $\widetilde\alpha(H)=H$, it follows from (\ref{09.05.1})  
 that
$Q(y)=0$ for all $y\in H$. Hence  $|\hat\mu_2(y)|=1$   for all $y\in H$. 

Arguing similarly, we exclude from equation (\ref{18.04.6}) the function $A(y)$ 
and obtain that $B(y)=0$ for all $y\in H$. This implies that $P(2y)=0$ for 
all $y\in H$. Since the subgroup $H^{(2)}$ is dense in  $H$, 
it follows from this that $P(y)=0$ 
for all $y\in H$. Hence  $|\hat\mu_1(y)|=1$   for all $y\in H$. Thus, we 
proved that (\ref{19.04.1}) holds.

Due to the facts that  $K=A(X, H)$,  the subgroup $H^{(2)}$ is dense in   
$H$, and $\widetilde\alpha(H)=H$, we can apply Lemma \ref{lem11}.
By Lemma \ref{lem11}, there are some shifts $\lambda_j$ of the distributions 
$\mu_j$  such that $\lambda_j$ are supported in  the subgroup    $K$.  
Moreover,  if
$\eta_j$ are independent random variables with values in the group $X$  
and distributions $\lambda_j$, then    the conditional  distribution of 
the linear form
$N_2=\eta_1 + \alpha\eta_2$ given $N_1=\eta_1 + \eta_2$ is symmetric. 
Since  $K=\mathrm{Ker}(I+\alpha)$, the restriction of the automorphism 
$\alpha$  to the subgroup $K$ coincides with $-I$.
Since $\sigma(\lambda_j)\subset K$, we can consider $\eta_j$ as independent 
random variables with values in the group   $K$. In so doing, the conditional 
distribution of the linear form  $N_2=\eta_1 -\eta_2$ given $N_1=\eta_1 + \eta_2$  
is symmetric. It follows from Lemma \ref{lem3}  applying to the group   $K$  that 
$\lambda_1=\lambda_2=\omega$, where $\omega$ is an arbitrary distribution 
supported in $K$.
The theorem is completely proved. \hfill$\Box$

\medskip

Consider the field of $p$-adic numbers $\mathbb{Q}_p$. When we say 
the group $\mathbb{Q}_p$, we mean the additive 
group of the field $\mathbb{Q}_p$.
The group $\mathbb{Q}_p$ is a totally disconnected 
locally compact Abelian group containing no elements of order  $2$. 
Each topological 
automorphism of the group $\mathbb{Q}_p$  is the multiplication 
by a nonzero element of 
$\mathbb{Q}_p$.
For this reason, if $\alpha$ is a topological 
automorphism of   $\mathbb{Q}_p$, then
 either $\mathrm{Ker}(I+\alpha)=\{0\}$ or 
$\mathrm{Ker}(I+\alpha)=\mathbb{Q}_p$.
Applying Theorem \ref{th3} and Lemma \ref{lem3} to the group $\mathbb{Q}_p$, 
we get the following statement.

\begin{proposition}\label{pr1} Consider the group $\mathbb{Q}_p$  and  
let  $\alpha$ be a topological automorphism of $\mathbb{Q}_p$. 
Let $\xi_1$ and $\xi_2$ be
independent random variables with values in   $\mathbb{Q}_p$ and distributions
$\mu_1$ and $\mu_2$ with nonvanishing  characteristic functions. 
Assume that   the conditional  distribution of the linear form $L_2 = \xi_1 + 
\alpha\xi_2$ given $L_1 = \xi_1 + \xi_2$ is symmetric. Then the following 
statements are valid.
\renewcommand{\labelenumi}{\upshape\arabic{enumi}.}
\begin{enumerate}
\item	
If $\alpha\ne-I$, then
$\mu_1$ and $\mu_2$ are degenerate distributions. 
\item
If $\alpha=-I$, then $\mu_1=\mu_2=\omega$, where $\omega$ 
is an arbitrary distribution.
\end{enumerate}
\end{proposition}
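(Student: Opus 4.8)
The plan is to read off both statements directly from Theorem~\ref{th3} together with Lemma~\ref{lem3}, using the explicit description of the automorphism group of $\mathbb{Q}_p$ recalled just above the proposition. Since $X=\mathbb{Q}_p$ is a totally disconnected locally compact Abelian group with no elements of order $2$, the hypotheses of Theorem~\ref{th3} are met as soon as $\hat\mu_1(y)$ and $\hat\mu_2(y)$ do not vanish and the conditional distribution of $L_2$ given $L_1$ is symmetric, which is exactly what is assumed. The key structural input is that $\alpha$ is multiplication by some nonzero $a\in\mathbb{Q}_p$, so $I+\alpha$ is multiplication by $1+a$; hence $\mathrm{Ker}(I+\alpha)$ is either $\{0\}$ (when $a\ne-1$, i.e.\ $\alpha\ne-I$) or all of $\mathbb{Q}_p$ (when $a=-1$, i.e.\ $\alpha=-I$). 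The whole argument splits along this dichotomy.

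For the first statement I would assume $\alpha\ne-I$, so that $\mathrm{Ker}(I+\alpha)=\{0\}$. By Theorem~\ref{th3} we have $\mu_j=\omega*E_{x_j}$ with $\omega$ supported in $\mathrm{Ker}(I+\alpha)=\{0\}$. A distribution supported at a single point is degenerate, so $\omega=E_0$ and therefore $\mu_j=E_0*E_{x_j}=E_{x_j}$. Thus both $\mu_1$ and $\mu_2$ are degenerate, as claimed.

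For the second statement I would set $\alpha=-I$. Then the linear form becomes $L_2=\xi_1+\alpha\xi_2=\xi_1-\xi_2$, and the hypothesis is that the conditional distribution of $\xi_1-\xi_2$ given $\xi_1+\xi_2$ is symmetric. Because $\mathbb{Q}_p$ contains no elements of order $2$, Lemma~\ref{lem3} applies verbatim and yields $\mu_1=\mu_2=\omega$ for an arbitrary distribution $\omega$ on $\mathbb{Q}_p$. One could alternatively invoke Theorem~\ref{th3}, which gives $\mu_j=\omega*E_{x_j}$ with $\omega$ now supported on all of $\mathbb{Q}_p$; the sharper conclusion $\mu_1=\mu_2$ comes precisely from Lemma~\ref{lem3}, which is why it is cited separately in the text preceding the proposition.

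I do not expect any genuine obstacle: all the analytic work has already been carried out in the proof of Theorem~\ref{th3}, and the only thing to verify here is the kernel dichotomy, which is immediate from the fact that every topological automorphism of $\mathbb{Q}_p$ is multiplication by a nonzero $p$-adic number. The one point worth stating carefully is why $\omega$ supported in $\{0\}$ forces $\mu_j$ to be degenerate, and why in the case $\alpha=-I$ the conclusion can be upgraded from ``shifts of a common $\omega$'' to the exact equality $\mu_1=\mu_2=\omega$ via Lemma~\ref{lem3}.
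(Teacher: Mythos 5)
Your proposal is correct and follows exactly the route the paper intends: the paper derives the proposition in one line by combining Theorem \ref{th3} with Lemma \ref{lem3} after observing that every topological automorphism of $\mathbb{Q}_p$ is multiplication by a nonzero element, so $\mathrm{Ker}(I+\alpha)$ is either $\{0\}$ or all of $\mathbb{Q}_p$. Your filling-in of the two cases (degenerate $\mu_j$ when the kernel is trivial; $\mu_1=\mu_2=\omega$ via Lemma \ref{lem3} when $\alpha=-I$) is exactly the intended argument.
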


\section{Heyde theorem  for the groups $\mathbb{R}^n\times G$, 
where $G$ is a totally \mbox{disconnected} locally  compact Abelian 
group  consisting of compact 
\mbox{elements}}

In this section we prove an analogue of Heyde's theorem for the group  
$X=\mathbb{R}^n\times G$, where $G$ is a totally disconnected locally  
compact Abelian group  consisting of compact elements and containing no 
elements of order  2. 
Denote by $x=(t, g)$, where $t\in \mathbb{R}^n$, $g\in G$, elements of the group 
$X$. Let $H$ be the character group of the group    $G$. Then the   group  $Y$ 
is topologically isomorphic to the group  $\mathbb{R}^n\times H$. Denote by 
$y=(s, h)$, where $s\in \mathbb{R}^n$, $h\in H$, elements of the group $Y$. 
Let  $\alpha$ be a topological automorphism of the group
$X$.   Since $\mathbb{R}^n$ is the connected component of the 
zero of the group  $X$, we have  $\alpha(\mathbb{R}^n)=\mathbb{R}^n$.  Since 
$G$ is the subgroup consisting of all compact elements of the group  $X$, we 
have $\alpha(G)=G$. Hence  $\alpha$ acts on elements of   $X$ as follows:  
$\alpha(t, g)=(\alpha_{{\mathbb{R}^n}} t, \alpha_{G} g)$, $t\in \mathbb{R}^n$, 
$g\in G$.  
In so doing, $\alpha_{\mathbb{R}^n}$ is an invertible linear operator in   
$\mathbb{R}^n$. Put $\mathcal{A}=\alpha_{\mathbb{R}^n}$.    Thus, 
$\alpha(t, g)=(\mathcal{A} t, \alpha_{G} g)$. We will write    $\alpha$   
in the form 
$\alpha=(\mathcal{A}, \alpha_{G})$  and will write  the adjoint automorphism
$\widetilde\alpha$ in the form $\widetilde\alpha=(\widetilde{\mathcal{A}}, 
\widetilde\alpha_{G})$.

Let us prove the following characterization theorem which can be considered 
as an analogue of Heyde's theorem for two independent random  variables for 
the group    $X=\mathbb{R}^n\times G$.
\begin{theorem}\label{th2} Let $X=\mathbb{R}^n\times G$, where $G$ is a totally 
disconnected  locally compact Abelian group  consisting of compact elements  
and containing no elements of order  $2$. Let  $\alpha$ be a topological 
automorphism 
of the group   $X$. Put $K=\mathrm{Ker}(I+\alpha)$. Let $\xi_1$ and $\xi_2$ be
independent random variables with values in   $X$ and distributions
$\mu_1$ and $\mu_2$ with nonvanishing  characteristic functions. If   
the conditional  distribution of the linear form $L_2 = \xi_1 + \alpha\xi_2$ given 
$L_1 = \xi_1 + \xi_2$ is symmetric, then there is an $\mathcal{A}$-invariant
 subspace
$F$ of  $\mathbb{R}^n$ such that $F\cap K=\{0\}$ and
$\mu_j=\gamma_j*\omega*E_{x_j}$, where $\gamma_j$ is a symmetric Gaussian 
distribution in  $F$, $\omega$ is a distribution supported in    
$K$, $x_j\in X$, $j=1, 2$.
\end{theorem}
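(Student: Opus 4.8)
The plan is to follow the finite–difference scheme used in the proof of Theorem \ref{th3}, but to separate the role of the connected factor $\mathbb{R}^n$ (which will produce the Gaussian components $\gamma_j$) from that of the totally disconnected factor $G$ (which will produce the part supported in $K$). First I would record the block structure. Since $Y\cong\mathbb{R}^n\times H$, where $H$ is the character group of $G$, and $G$ is totally disconnected and consists of compact elements, $H$ is itself totally disconnected and all its elements are compact; moreover $\widetilde\alpha=(\widetilde{\mathcal A},\widetilde\alpha_G)$ respects the decomposition and $\widetilde\alpha(\{0\}\times H)=\{0\}\times H$. As in Theorem \ref{th3}, Lemmas \ref{lem8} and \ref{lem7} turn the symmetry hypothesis into equation (\ref{18.04.2}); passing to $\nu_j=\mu_j*\bar\mu_j$ and setting $P=\log\hat\nu_1$, $Q=\log\hat\nu_2$ yields (\ref{18.04.3}) with $A,B$ as in (\ref{18.04.4}).

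Next I would run the three successive finite differences exactly as in the derivation of (\ref{18.04.7}). The group $X$ contains no elements of order $2$ (because $\mathbb{R}^n$ is torsion free and $G$ has none), so by Lemma \ref{lem9} the subgroup $Y^{(2)}$ is dense, and I conclude that $A$ and $B$ are continuous polynomials of degree at most two on $\mathcal H=\overline{(I+\widetilde\alpha)Y}=(I+\widetilde{\mathcal A})\mathbb{R}^n\times H_1$, where $H_1=\overline{(I+\widetilde\alpha_G)H}$. Since $H_1$ consists of compact elements, Lemma \ref{lem6} applied to the polynomials $h\mapsto A(s,h)$ and $h\mapsto B(s,h)$ on $H_1$ shows that $A$ and $B$ do not depend on the $H$–coordinate: they are ordinary polynomials of degree $\le 2$ on the real subspace $V=(I+\widetilde{\mathcal A})\mathbb{R}^n$. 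This is the step that splits the problem into its real and totally disconnected parts.

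Restricting (\ref{18.04.3}) to the $\widetilde\alpha$–invariant subgroup $\{0\}\times H$, the $\mathbb{R}^n$–coordinate is zero, so $A=B=0$ on $H_1$; then, repeating the argument of Theorem \ref{th3} on the group $G$ (equivalently, applying Theorem \ref{th3} to the $G$–projections $\eta_j=\pi_G\xi_j$, whose conditional symmetry is inherited through the homomorphism $\pi_G$ intertwining $\alpha$ and $\alpha_G$), I obtain $P=Q=0$ on $H_1$, that is, condition (\ref{19.04.1}) holds on the subgroup $\{0\}\times H_1$. Since $K_G:=\mathrm{Ker}(I+\alpha_G)=A(G,H_1)$ and $H_1^{(2)}$ is dense in $H_1$, Lemma \ref{lem11} lets me replace $\mu_j$ by shifts supported in $\mathbb{R}^n\times K_G$, preserving the symmetry. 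I have thus reduced to the group $\mathbb{R}^n\times K_G$, on which $\alpha=(\mathcal A,-I)$ and $K=\mathrm{Ker}(I+\alpha)=K_{\mathbb R}\times K_G$ with $K_{\mathbb R}=\mathrm{Ker}(I+\mathcal A)$.

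On this reduced group $A$ and $B$ are honest quadratic forms on $V$, and solving (\ref{18.04.3}) for $P$ and $Q$ I expect the continuous part of $\log|\hat\mu_j|$ to be a negative semidefinite quadratic form carried by an $\mathcal A$–invariant subspace $F$ on which $I+\mathcal A$ is invertible (so that $F\cap K=\{0\}$), which identifies the symmetric Gaussians $\gamma_j$ in $F$, while the remaining factor is supported in $K$, where $\alpha=-I$; there Lemma \ref{lem3} forces the two remaining distributions to coincide, producing the common $\omega$, and the leftover translations are absorbed into $E_{x_j}$. I expect the main obstacle to be precisely this gluing: the functional equation must be shown to decouple the Gaussian directions $F$ from the kernel directions $K$ (no mixed $F$–$K$ terms in $\hat\mu_j$), which is what upgrades the separate marginal statements into the single convolution factorisation $\mu_j=\gamma_j*\omega*E_{x_j}$. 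Constructing the $\mathcal A$–invariant subspace $F$ with $F\cap K=\{0\}$ and controlling the behaviour of $\mathcal A$ near the eigenvalue $-1$ is the delicate point, since the group $\mathbb{R}^n\times G$ is neither purely real (as in the classical Heyde theorem) nor purely totally disconnected (as in Theorem \ref{th3}).
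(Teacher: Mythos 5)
Your reduction skeleton is essentially the paper's: the paper also splits the problem by restricting the Heyde equation to the two factors (setting $h_1=h_2=0$ to get the $\mathbb{R}^n$-marginals and $s_1=s_2=0$ to get the $G$-marginals), obtains the $\mathcal{A}$-invariant subspace $F$ with $F\cap\mathrm{Ker}(I+\mathcal{A})=\{0\}$ from the real-variable Heyde theorem (quoted as Lemma \ref{le25.1}), applies Theorem \ref{th3} to the $G$-marginals to land in $K_2=\mathrm{Ker}(I+\alpha_G)$, and then observes that the support of a joint distribution lies in the product of the supports of its marginals, so that suitable shifts of $\mu_j$ are supported in $F\times K$. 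Up to that point your plan is sound (the inheritance of conditional symmetry under the projection $\pi_G$ is correct, and your use of Lemma \ref{lem6} to kill the $H$-dependence of $A$ and $B$ is a legitimate variant of what the paper does).

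The genuine gap is the final step, which you yourself flag as ``the main obstacle'' and then leave as an expectation: proving that on $F\times K$ the characteristic functions factor as $\hat\mu_j(s,l)=\hat\gamma_j(s)\hat\omega(l)$ with a \emph{common} $\omega$, i.e.\ that there are no mixed terms coupling the Gaussian directions to the $K$-directions. This is the content of the paper's Lemma \ref{lem10}, and it cannot be reached by the finite-difference method you propose to continue with: that method concludes via $|\hat\mu_j|=1$ on a subgroup (Lemmas \ref{lem6} and \ref{lem11}), whereas in the real directions $|\hat\mu_j|$ decays like a Gaussian, so $P$ and $Q$ are genuinely nonzero quadratic polynomials and knowing that the combinations $A(y)=P((I+\widetilde\alpha)y)+Q(2\widetilde\alpha y)$ and $B$ are quadratic does not by itself disentangle $P(s,l)$ in $s$ from its dependence on $l$. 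The paper's argument is complex-analytic: it first gets Gaussianity of the marginals $\hat\mu_j(s,0)$ from the Ghurye--Olkin theorem (Lemma \ref{A}), then uses Lemma \ref{lem5} to extend $\hat\mu_j(\cdot,l)$ to entire functions on $\mathbb{C}^m$ with the maximum-modulus bound (\ref{16.05.6}), shows these extensions never vanish on $\mathbb{C}^m\times L$ by a substitution exploiting the invertibility of $\widetilde{\mathcal{A}}+I$, invokes Hadamard's theorem to get the representation (\ref{16.05.7}) with $l$-dependent quadratic data $A_j(l)$, $b_j(l)$, and only then matches coefficients in the Heyde equation to force $A_j(l)\equiv -A_j$ and $b_j(l)\equiv 0$. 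None of this machinery appears in your proposal, and without it the convolution factorisation $\mu_j=\gamma_j*\omega*E_{x_j}$ is not established.
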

To prove Theorem \ref{th2} we need some lemmas.
\begin{lemma}[{\protect\!\!\cite[Corollary 2.6]{H2}}]
\label{le25.1} 
Let    $\mathcal{A}$ be an invertible linear operator in the space   
$\mathbb{R}^n$.  
Put $K=\mathrm{Ker}(I+\mathcal{A})$ and suppose $K\ne\{0\}$.  Let 
$\xi_1$ and $\xi_2$ be
independent random vectors with values in  $\mathbb{R}^n$ and distributions 
$\mu_1$ and $\mu_2$. If   the conditional  distribution of 
the linear form $L_2 = \xi_1 + \mathcal{A}\xi_2$ given $L_1 = \xi_1 + \xi_2$ 
is symmetric, then there exists an  $\mathcal{A}$-invariant subspace $F$ of 
$\mathbb{R}^n$ 
satisfying the condition    
$F\cap K=\{0\}$ and such that some shifts $\lambda_j$  of the distributions 
$\mu_j$   are supported in the subspace $F\times K$. 
Furthermore, if $\eta_j$ are independent   random vectors with values in 
the space $\mathbb{R}^n$ 
and distributions  $\lambda_j$, then the conditional  distribution 
of the linear form
$N_2=\eta_1 + \mathcal{A}\eta_2$ given $N_1=\eta_1 + \eta_2$  is symmetric.
\end{lemma}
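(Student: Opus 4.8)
The plan is to convert the hypothesis into the Heyde functional equation on $Y=\mathbb{R}^n$, solve it up to a Gaussian factor by the finite difference method, and then extract the support from a linear-algebra analysis of the resulting Gaussian covariances. First I would apply Lemma \ref{lem1}: after identifying $Y$ with $\mathbb{R}^n$, the symmetry of the conditional distribution of $L_2$ given $L_1$ is equivalent to the Heyde equation $(\ref{11.04.1})$ for $\hat\mu_1,\hat\mu_2$ with $\alpha$ replaced by $\mathcal A$. Passing to the symmetrizations $\nu_j=\mu_j*\bar\mu_j$, the functions $\hat\nu_j=|\hat\mu_j|^2$ are real, even and strictly positive near $0$, so $\phi_j=\log\hat\nu_j$ is well defined there; equivalently, by Lemmas \ref{lem8} and \ref{lem7} the $\hat\nu_j$ satisfy the Skitovich--Darmois equation $(\ref{18.04.2})$, whose logarithmic form is $(\ref{18.04.3})$. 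Running the very same finite difference elimination as in the proof of Theorem \ref{th3} (the passage from $(\ref{18.04.5})$ to $(\ref{18.04.7})$) shows that on $H=\mathrm{Im}(I+\widetilde{\mathcal A})$ one has $\Delta_h^{3}\phi_j=0$, i.e. $\phi_j|_H$ is a polynomial of degree at most $2$. Here the situation departs from the totally disconnected case: Lemma \ref{lem6} no longer applies, so $\phi_j|_H$ is a genuine nonpositive quadratic form rather than $0$. Since the annihilator of $H$ is $K=\mathrm{Ker}(I+\mathcal A)$, the subgroup $H$ is the character group of $Z:=\mathbb{R}^n/K$, and $\hat\nu_j|_H$ is the characteristic function of the projection $(\pi_K)_*\nu_j$, where $\pi_K\colon\mathbb{R}^n\to Z$ is the canonical map. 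Thus $(\pi_K)_*\nu_j$ is a centred Gaussian on $Z$; as it equals $((\pi_K)_*\mu_j)*\overline{(\pi_K)_*\mu_j}$, Cram\'er's decomposition theorem shows that $(\pi_K)_*\mu_j$ is itself Gaussian on $Z$, with mean $m_j$ and covariance operator $\Sigma_j$.

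Next I would restrict $(\ref{11.04.1})$ to $u,v\in H$, where $\widetilde{\mathcal A}$ induces the adjoint $\widetilde{\bar{\mathcal A}}$ of the induced automorphism $\bar{\mathcal A}$ of $Z$, so that the two Gaussians satisfy the Heyde equation on $Z$ for $\bar{\mathcal A}$. Comparing the linear parts gives $m_1=-\bar{\mathcal A}m_2$, while polarizing the quadratic parts gives $\Sigma_1=-\bar{\mathcal A}\Sigma_2$ and, using that the $\Sigma_j$ are self-adjoint, $\bar{\mathcal A}\Sigma_2=\Sigma_2\widetilde{\bar{\mathcal A}}$. From these identities I read off that the ranges coincide, $R:=\mathrm{Im}\,\Sigma_1=\mathrm{Im}\,\Sigma_2$, that $R$ is $\bar{\mathcal A}$-invariant, and that $\bar{\mathcal A}|_R$ is self-adjoint for the inner product induced by $\Sigma_2|_R$; hence $\bar{\mathcal A}|_R$ is diagonalizable with real spectrum.

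The heart of the matter, and the step I expect to be the main obstacle, is to prove that $-1\notin\mathrm{spec}(\bar{\mathcal A}|_R)$, equivalently $R\cap\mathrm{Ker}(I+\bar{\mathcal A})=\{0\}$; this is precisely where the kernel $K$ is critical and where the scalar Heyde equation degenerates (cf. Lemma \ref{lem3}). A nonzero $(-1)$-eigenvector $e\in R$ lifts to $\hat e\in\mathbb{R}^n$ with $k:=(I+\mathcal A)\hat e\in K$, and necessarily $k\ne0$, since otherwise $\hat e\in K$ and $e=0$; thus $\hat e$ and $k$ span a two-dimensional $\mathcal A$-invariant subspace on which $\mathcal A$ is a single Jordan block for the eigenvalue $-1$ coupled to $K$. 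Projecting the functional equation onto this block and making the substitution that aligns the $\widetilde{\mathcal A}$-shift forces the marginal of $(\pi_K)_*\mu_j$ along the line $\mathbb{R}e$ to have characteristic function of modulus one, hence to be degenerate, contradicting $e\in\mathrm{Im}\,\Sigma_j$. It is convenient first to reduce to the generalized eigenspace $V_{-1}$ of $\mathcal A$ for $-1$ via the $\mathcal A$-equivariant projection along the remaining generalized eigenspaces; there diagonalizability already forces $\bar{\mathcal A}|_R=-I$, so the whole of $R$ must be annihilated by the degeneracy argument. I anticipate this exclusion of Jordan blocks for the eigenvalue $-1$ inside the Gaussian support to be the delicate part, since it is exactly here that allowing an arbitrary $\alpha$ collides with the non-invertibility of $I+\mathcal A$.

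Finally, granting $-1\notin\mathrm{spec}(\bar{\mathcal A}|_R)$, I would assemble the conclusion. Put $\tilde R=\pi_K^{-1}(R)$; then the $(-1)$-generalized eigenspace of $\mathcal A|_{\tilde R}$ is exactly $K$, and I take $F$ to be the sum of the remaining generalized eigenspaces of $\mathcal A|_{\tilde R}$. Then $F$ is $\mathcal A$-invariant, $F\cap K=\{0\}$ and $\tilde R=F\oplus K$, so $\pi_K$ carries $F$ isomorphically onto $R$. For the shifts, choose $x_2\in\mathbb{R}^n$ with $\pi_K(x_2)=m_2$ and set $x_1=-\mathcal A x_2$; the relation $m_1=-\bar{\mathcal A}m_2$ then gives $\pi_K(x_1)=m_1$, so the shifted distributions $\lambda_j=\mu_j*E_{-x_j}$ satisfy $\pi_K(\sigma(\lambda_j))\subseteq R$, whence $\sigma(\lambda_j)\subseteq\pi_K^{-1}(R)=F\oplus K$. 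Since $x_1+\mathcal A x_2=0$, the character factors introduced by the shifts cancel in $(\ref{11.04.1})$, so by Lemma \ref{lem1} the conditional distribution of $N_2=\eta_1+\mathcal A\eta_2$ given $N_1=\eta_1+\eta_2$ remains symmetric for $\eta_j$ with distributions $\lambda_j$. This produces the $\mathcal A$-invariant subspace $F$ and the shifts required by the statement.
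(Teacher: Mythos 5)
You should first note that the paper contains no proof of this lemma: it is imported verbatim as \cite[Corollary 2.6]{H2}, so your attempt is measured against the substance of that external result rather than an in-text argument. Your skeleton is largely sound — Lemma \ref{lem1} to get the Heyde equation, symmetrization, the finite-difference elimination on $H=\mathrm{Im}(I+\widetilde{\mathcal A})$, Cram\'er to make $(\pi_K)_*\mu_j$ Gaussian, the identities $m_1=-\bar{\mathcal A}m_2$, $\Sigma_1=-\bar{\mathcal A}\Sigma_2$, $\bar{\mathcal A}\Sigma_2=\Sigma_2\widetilde{\bar{\mathcal A}}$, and the final assembly (generalized eigenspaces of $\mathcal A|_{\tilde R}$ giving $F$, and the phase cancellation from $x_1+\mathcal A x_2=0$) all check out. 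But there is a genuine hole exactly where you flag ``the main obstacle'': the claim $-1\notin\mathrm{spec}\bigl(\bar{\mathcal A}|_R\bigr)$ is asserted, not proved, and the mechanism you sketch fails as stated. ``Projecting the functional equation onto this block'' requires an $\mathcal A$-equivariant projection onto $\mathrm{span}(\hat e,k)$, i.e.\ an $\mathcal A$-invariant complement to it, and none exists once the Jordan chain through $k$ has length at least $3$: for a single nilpotent chain $f_1=Nf_2$, $f_2=Nf_3$ with $\mathcal A=-I+N$, the only invariant subspaces are $\langle f_1\rangle\subset\langle f_1,f_2\rangle$. The marginals compatible with the Heyde equation are exactly push-forwards along quotients by $\mathcal A$-invariant subspaces (restriction of characters to $\widetilde{\mathcal A}$-invariant annihilators), and every such quotient destroys the configuration: quotienting by $K$ sends $k$ to $0$, and $e$, being by construction a $(-1)$-eigenvector of $\bar{\mathcal A}$, lies in $\mathrm{Ker}(I+\bar{\mathcal A})$ on $Z$, where the quotient equation is the degenerate case $\alpha=-I$ of Lemma \ref{lem3} and tolerates \emph{arbitrary} distributions, including a nondegenerate Gaussian along $e$. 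So no single invariant marginal can yield your modulus-one contradiction; the argument must exploit the coupling of $e$ to $k$ inside the full equation while disentangling the unknown factor supported in $K$ from the Gaussian factor — which is precisely the content of Theorem 2.1 of \cite{H2} that you would be re-proving, and it is not supplied. (In the minimal $2\times 2$ block the needed substitution does work: putting $u_1=v_1$ in the full equation makes the $K$-marginal factor out and forces the quotient variance to vanish; but your text neither gives this substitution nor explains how to reach a two-dimensional invariant quotient retaining both $e$ and $k$ in general.)

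A second, repairable gap: unlike Theorems \ref{th3} and \ref{th2}, this lemma imposes no nonvanishing hypothesis on $\hat\mu_j$, so $\hat\nu_j=|\hat\mu_j|^2$ may have zeros and $\log\hat\nu_j$ exists only near the origin, whereas the elimination leading from (\ref{18.04.5}) to (\ref{18.04.7}) is global in $h_1,h_2,h_3,u,v$ as written. You must run it locally to conclude that $\log\hat\nu_j|_H$ is a quadratic form on a neighbourhood of $0$, and then invoke the moment-determinacy of the Gaussian (a characteristic function agreeing with $e^{-q}$ near $0$ has the moments of that Gaussian, hence equals it everywhere) before Cram\'er's theorem applies. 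You acknowledge positivity near $0$ but then proceed as if the logarithm and the quadratic representation were global; on $\mathbb{R}^n$ this can be patched by the standard argument just described, but the patch belongs in the proof.
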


The following statement is an analogue of Heyde's theorem for  
independent random vectors  with values in the space  $\mathbb{R}^n$.

\begin{lemma}\label{A}  Let    
$\mathcal{A}$ be an invertible linear operator in the space   $\mathbb{R}^n$  
such that the number $-1$ is not its   eigenvalue.  Let $\xi_1$ and $\xi_2$ be
independent random vectors   with values in   $\mathbb{R}^n$ 
and distributions $\mu_1$ 
and $\mu_2$. If   the conditional  distribution of the linear form 
$L_2 = \xi_1 + \mathcal{A}\xi_2$ given $L_1 = \xi_1 + \xi_2$ is symmetric, 
then  $\mu_j$ are Gaussian distributions.
\end{lemma}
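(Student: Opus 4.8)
The plan is to reduce to the symmetric case, convert the symmetry condition into the functional equation of Lemmas \ref{lem8} and \ref{lem7}, and solve it locally by the finite difference method exactly as in the proof of Theorem \ref{th3}, the point being that here $Y=\mathbb{R}^n$ and $\widetilde\alpha=\widetilde{\mathcal A}$. First I would pass to the distributions $\nu_j=\mu_j*\bar\mu_j$, whose characteristic functions $\hat\nu_j(y)=|\hat\mu_j(y)|^2$ are real, nonnegative, equal to $1$ at $0$, and satisfy the same Heyde equation (\ref{11.04.1}), hence also (\ref{18.04.2}), as the $\hat\mu_j$. Since $-1$ is not an eigenvalue of $\mathcal A$, it is not an eigenvalue of $\widetilde{\mathcal A}$ either, so $I+\widetilde\alpha$ and $2\widetilde\alpha$ are invertible operators on $\mathbb{R}^n$. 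By Cram\'er's decomposition theorem a convolution factor of a Gaussian distribution on $\mathbb{R}^n$ is Gaussian, so it suffices to prove that each $\nu_j$ is Gaussian; thus I may assume $\mu_j=\nu_j$ is symmetric with $\hat\mu_j\geq 0$.

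Because $\hat\nu_j$ is continuous and $\hat\nu_j(0)=1$, both are positive on some ball $B_\delta\subset\mathbb{R}^n$, where $P=\log\hat\nu_1$ and $Q=\log\hat\nu_2$ are well-defined, continuous and even, with $P(0)=Q(0)=0$. On a smaller ball the logarithmic form (\ref{18.04.3}) of the equation holds, with $A,B$ given by (\ref{18.04.4}). I would then run the finite difference computation (\ref{18.04.5})--(\ref{18.04.7}) from the proof of Theorem \ref{th3}, with all increments $h_1,h_2,h_3$ taken small enough that every substitution stays inside $B_\delta$. The crucial point is that $I+\widetilde\alpha$ is invertible, so as $h_1,h_2,h_3$ range over a neighborhood of $0$ the increments $(I+\widetilde\alpha)h_1$, $2h_2$, $h_3$ in (\ref{18.04.7}) range over a full neighborhood of $0$; hence $A$ has vanishing third-order differences near $0$ and is a polynomial of degree $\leq 2$ there. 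The symmetric elimination of $A$ from (\ref{18.04.6}) shows that $B$ is likewise a polynomial of degree $\leq 2$ near $0$.

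Next I would extract that $P$ and $Q$ themselves are quadratic near $0$. Substituting the quadratic expressions for $A$ and $B$ into (\ref{18.04.3}) and taking third-order differences jointly in $(u,v)$ with increments $(k,l)$ chosen so that $2\widetilde\alpha k+(I+\widetilde\alpha)l=0$ annihilates the $Q$-term, I obtain $\Delta_{Mk_1}\Delta_{Mk_2}\Delta_{Mk_3}P=0$ with $M=(I-\widetilde\alpha)^2(I+\widetilde\alpha)^{-1}$, and symmetrically for $Q$. Where $M$ is invertible, that is, in the directions in which $\widetilde\alpha$ has no eigenvalue $1$, this makes $P$ a polynomial of degree $\leq 2$; being even, $P(y)=-\langle B_1y,y\rangle$ there, with $B_1\geq 0$ since $\hat\nu_1\leq 1$. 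The directions in which $\widetilde\alpha$ has eigenvalue $1$ are the main obstacle and I expect them to be the delicate part: restricting (\ref{11.04.1}) to $u,v$ in the invariant subspace $V_1$ on which $\widetilde\alpha$ acts as the identity collapses it to $(P+Q)(u+v)=(P+Q)(u-v)$, which forces $(P+Q)(2v)=0$, hence $P=Q=0$ on $V_1$; thus $\hat\nu_j\equiv 1$ near $0$ along $V_1$ and $\nu_j$ is degenerate in those directions. (The generalized eigenspace for the eigenvalue $1$, where $M$ degenerates without $V_1$ splitting off, is precisely where this step needs the most care.) In all directions, then, $\hat\nu_j$ coincides near $0$ with $\exp(-\langle B_jy,y\rangle)$, a possibly degenerate Gaussian characteristic function.

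Finally, a characteristic function that agrees with a Gaussian characteristic function in a neighborhood of the origin is that Gaussian: $\hat\nu_j$ is then $C^\infty$ at $0$, so $\nu_j$ has moments of all orders and they coincide with those of the Gaussian $\gamma_j$ with characteristic function $\exp(-\langle B_jy,y\rangle)$; since the Gaussian moment problem on $\mathbb{R}^n$ is determinate, $\nu_j=\gamma_j$ is Gaussian. (Alternatively one continues $\hat\nu_j$ analytically into a horizontal strip and uses uniqueness there.) Applying Cram\'er's theorem once more returns that the original $\mu_j$ are Gaussian, which completes the proof.
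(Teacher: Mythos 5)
Your argument has a genuine, and in fact self-acknowledged, gap at the generalized eigenspace of $\widetilde{\mathcal{A}}$ for the eigenvalue $1$. Your passage from ``$A$ and $B$ are quadratic near $0$'' to ``$P$ and $Q$ are quadratic near $0$'' rests on taking third differences of $P$ along increments $Mk$ with $M=(I-\widetilde{\mathcal{A}})^2(I+\widetilde{\mathcal{A}})^{-1}$, which controls $P$ only in the directions of the range of $(I-\widetilde{\mathcal{A}})^2$; your supplementary argument on the eigenspace $V_1$ (where $\widetilde{\mathcal{A}}y=y$) covers only that eigenspace, not the generalized one. For a Jordan block such as $\widetilde{\mathcal{A}}=\left(\begin{smallmatrix}1&1\\0&1\end{smallmatrix}\right)$ on $\mathbb{R}^2$ one has $(I-\widetilde{\mathcal{A}})^2=0$, so $M=0$ and the third-difference step yields nothing, while $V_1$ is a proper one-dimensional subspace; an entire direction is left uncontrolled. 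The lemma is of course still true in that case, but your proof as written does not establish it, and closing this hole is essentially as hard as the rest of the argument. (The other ingredients --- symmetrization plus Cram\'er's decomposition theorem to reduce to $\nu_j=\mu_j*\bar\mu_j$, the local finite-difference computation on a ball where $\hat\nu_j>0$, and the fact that a characteristic function agreeing with a Gaussian one near the origin is that Gaussian --- are sound.)

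You have also taken a far longer road than necessary. The paper's proof is three lines: by Lemma \ref{lem8} the symmetry of the conditional distribution of $L_2$ given $L_1$ forces the linear forms $M_1=(I+\mathcal{A})\xi_1+2\mathcal{A}\xi_2$ and $M_2=2\xi_1+(I+\mathcal{A})\xi_2$ to be independent; since $-1$ is not an eigenvalue of $\mathcal{A}$, all four coefficients $I+\mathcal{A}$, $2\mathcal{A}$, $2I$, $I+\mathcal{A}$ are invertible operators, and the Ghurye--Olkin theorem (the multivariate Skitovich--Darmois theorem) then gives at once that $\mu_1$ and $\mu_2$ are Gaussian. What you are attempting is, in effect, a from-scratch reproof of Ghurye--Olkin by the finite-difference method, and the eigenvalue-$1$ degeneracy you ran into is exactly the difficulty that citing that theorem avoids.
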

\begin{proof}
By Lemma \ref{lem8}, the linear forms $M_1=(I+\mathcal{A})\xi_1+2\mathcal{A}\xi_2$ 
and
$M_2=2\xi_1+(I+\mathcal{A})\xi_2$ are independent. Since the number $-1$ is not an
   eigenvalue of the operator $\mathcal{A}$, the operator $I+\mathcal{A}$ is invertible.
   Hence the coefficients of the linear forms $M_1$ and $M_2$ are
   invertible operators. By the well-known Ghurye--Olkin 
   theorem (\!\!\cite{GhurO}, see also \cite[Theorem 3.2.1]{KaLiRa}), $\mu_1$ 
and $\mu_2$ are Gaussian distributions\footnote{The original proof of 
the Ghurye--Olkin 
   theorem in the case of $n$ independent random variables is based on 
   solving the Skitovich--Darmois equation in the space $\mathbb{R}^m$ 
   (see \cite[\S 3.2]{KaLiRa}). It should be 
   noted that based on characterization of polynomials as the solutions 
   set of some functional equations,
J. M. Almira in \cite{A1} proposed a new approach to solving the Skitovich--Darmois 
functional equation in the
space $\mathbb{R}^m$ for arbitrary $n$ functions.}. 
\end{proof}

Let $K$ be a locally compact Abelian group with character group  $L$.
Consider the group $X=\mathbb{R}^m\times K$. 
The   
group  $Y$ is topologically isomorphic to the group  $\mathbb{R}^m\times L$. 
Denote by $(t, k)$,
$t\in \mathbb{R}^m$, $k\in K$, elements  of the group $X$ and by
 $(s, l)$,
$s\in \mathbb{R}^m$, $l\in L$, elements  of the group $Y$.
Put 
$$
B_r=\{s=(s_{1}, s_{2},\dots, s_{m})\in \mathbb{C}^m: |s_{j}|\le r, 
\ j=1, 2, \dots, m\}.
$$

\begin{lemma}[{\protect\!\!\cite[Proposition 2.13]{Febooknew}}]\label{lem5}
 Let
$\mu$ be a distribution on
the group $X=\mathbb{R}^m\times K$  with the characteristic function    
$\hat\mu(s, l)$, 
$s\in \mathbb{R}^m$, $l\in L$. Assume that the function $\hat\mu(s, 0)$, 
$s\in \mathbb{R}^m$, is extended to   $\mathbb{C}^m$  as an entire 
function in $s$. Then for any fixed   $l\in L$  the function $\hat\mu(s, l)$  
is also extended to  $\mathbb{C}^m$ as an entire function in $s$, the 
representation
$$
	\hat\mu(s, l)=\int\limits_{X} e^{i\langle t, s\rangle}(k,	l)d\mu(t, k)
	$$
	is valid for all $s\in \mathbb{C}^m$, $l\in L$,
and  for each $l\in L$ the inequality
\begin{equation}\label{16.05.6}
\max_{ s\in B_r}|\hat\mu(s, l)|\le \max_{s\in B_r}|\hat\mu(s, 0)| 
\end{equation}
 holds.
\end{lemma}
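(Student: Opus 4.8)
The plan is to reduce everything to the one-dimensional theory of analytic characteristic functions applied to the marginal of $\mu$ on the vector part. Let $\pi\colon X=\mathbb{R}^m\times K\to\mathbb{R}^m$ be the natural projection and let $\nu$ be the image of $\mu$ under $\pi$, i.e. the distribution of the $\mathbb{R}^m$-component. Since $(k,0)=1$ for all $k\in K$, we have $\hat\mu(s,0)=\int_X e^{i\langle t,s\rangle}\,d\mu(t,k)=\int_{\mathbb{R}^m}e^{i\langle t,s\rangle}\,d\nu(t)=\hat\nu(s)$, so the hypothesis says exactly that the characteristic function of $\nu$ extends to an entire function on $\mathbb{C}^m$. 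The crux of the proof is to convert this analyticity into a statement about exponential moments of $\nu$.

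For this I would invoke the classical theorem on analytic characteristic functions: if the characteristic function of a distribution on $\mathbb{R}$ extends to an entire function, then the distribution has finite exponential moments in every direction and the extension is represented by the corresponding Fourier integral. Restricting $\hat\nu$ to each axis $s_j$ gives the entire characteristic function of the $j$-th one-dimensional marginal of $\nu$; applying the theorem coordinatewise yields $\int_{\mathbb{R}^m}e^{c t_j}\,d\nu(t)<\infty$ for every $c\in\mathbb{R}$ and every $j$. A convexity estimate then upgrades these to joint exponential moments: by Jensen's inequality $e^{\langle t,\tau\rangle}\le\frac{1}{m}\sum_{j=1}^m e^{m\tau_j t_j}$, so $\int_{\mathbb{R}^m}e^{\langle t,\tau\rangle}\,d\nu(t)<\infty$ for all $\tau\in\mathbb{R}^m$. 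Transferring back through $\pi$, this gives $\int_X e^{-\langle t,\tau\rangle}\,d\mu(t,k)=\int_{\mathbb{R}^m}e^{-\langle t,\tau\rangle}\,d\nu(t)<\infty$ for every $\tau\in\mathbb{R}^m$.

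With the exponential moments in hand, for each fixed $l\in L$ define $g_l(s)=\int_X e^{i\langle t,s\rangle}(k,l)\,d\mu(t,k)$, $s\in\mathbb{C}^m$. Writing $s=\sigma+i\tau$ one has $|e^{i\langle t,s\rangle}(k,l)|=e^{-\langle t,\tau\rangle}$, so the integral converges absolutely and is dominated on each polydisc by a finite constant; in particular $g_l$ is continuous. Entirety follows in the usual way: the integrand is entire in $s$, and Fubini's theorem combined with Cauchy's theorem verifies Morera's condition in each variable separately, whereupon Osgood's lemma (continuous plus separately holomorphic) gives joint holomorphy on $\mathbb{C}^m$. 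Since $g_l(s)=\hat\mu(s,l)$ for real $s$, the function $g_l$ is the asserted entire extension and furnishes the integral representation for all $s\in\mathbb{C}^m$.

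Finally, the maximum modulus inequality is a direct estimate. For $s=\sigma+i\tau$,
\[ |\hat\mu(s,l)|\le\int_X e^{-\langle t,\tau\rangle}\,d\mu(t,k)=\hat\mu(i\tau,0), \]
where the right-hand side is a nonnegative real number independent of $l$ and of $\sigma$. If $s\in B_r$, then $|\tau_j|\le|s_j|\le r$, so $i\tau\in B_r$; hence $|\hat\mu(s,l)|\le\max_{s'\in B_r}|\hat\mu(s',0)|$, and taking the maximum over $s\in B_r$ yields $(\ref{16.05.6})$. I expect the main obstacle to be the first reduction, namely rigorously passing from the mere existence of an entire extension of $\hat\nu$ to the finiteness of all exponential moments, since this is where the genuine analytic input (the analytic-characteristic-function theorem together with the convexity step) is used; the subsequent holomorphy and the modulus estimate are routine once absolute convergence has been secured.
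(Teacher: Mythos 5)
The paper does not actually prove this lemma---it is imported verbatim from \cite[Proposition 2.13]{Febooknew}---but your argument is correct and follows what is essentially the standard proof of that proposition: reduce to the $\mathbb{R}^m$-marginal $\nu$, use the classical theory of analytic characteristic functions (applied coordinatewise, then upgraded by convexity) to get all exponential moments of $\nu$, and then the absolutely convergent Fourier integral furnishes the entire extension while the ridge-type estimate $|\hat\mu(\sigma+i\tau,l)|\le\hat\mu(i\tau,0)$ with $i\tau\in B_r$ yields (\ref{16.05.6}). The only step worth making explicit is the identification of the hypothesized entire extension of $\hat\mu(s,0)$ with the Fourier integral on all of $\mathbb{C}^m$, which requires the (easy, iterated one-variable) fact that two entire functions on $\mathbb{C}^m$ agreeing on $\mathbb{R}^m$ coincide.
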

The following lemma is a generalization of Lemma 3.3 in \cite{H1}. In the proof    
we  follow 
the scheme of the proof of Theorem 2.1 in
\cite{H2}.
\begin{lemma}\label{lem10}  Let $X=\mathbb{R}^m\times K$, where $K$ is a 
locally compact Abelian group  containing no elements of order $2$.
Let  $\alpha$ be a topological automorphism of the group    $X$ of the 
form $\alpha(t, k)=(\mathcal{A}t, -k)$, where $\mathcal{A}$ is an invertible 
linear operator 
in the space   $\mathbb{R}^m$  such that the number $-1$ is not its   eigenvalue.  
Let $\xi_1$ and $\xi_2$ be
independent random variables with values in   $X$ and distributions 
$\mu_1$ and $\mu_2$ with nonvanishing  characteristic functions.
 If   the conditional  distribution of the linear form  $L_2 = \xi_1 + \alpha\xi_2$ 
 given $L_1 = \xi_1 + \xi_2$ is symmetric, then $\mu_j=\gamma_j*\omega$, 
 where $\gamma_j$ is a Gaussian distribution in  $\mathbb{R}^m$ and 
 $\omega$ is a distribution supported in $K$,   $j=1, 2$.
\end{lemma}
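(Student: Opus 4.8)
The plan is to convert the symmetry hypothesis into the Heyde functional equation and then to study the characteristic functions $\hat\mu_j$ as entire functions in the $\mathbb{R}^m$-variable, following the scheme of Theorem 2.1 in \cite{H2}. Write $y=(s,l)$ with $s\in\mathbb{R}^m$, $l\in L$, where $L$ is the character group of $K$. Since $\alpha(t,k)=(\mathcal{A}t,-k)$, the adjoint acts by $\widetilde\alpha(s,l)=(\widetilde{\mathcal{A}}s,-l)$, and by Lemma \ref{lem1} the symmetry of the conditional distribution of $L_2$ given $L_1$ is equivalent to equation (\ref{11.04.1}), which here reads
\begin{equation*}
\hat\mu_1(s_1+s_2,\,l_1+l_2)\hat\mu_2(s_1+\widetilde{\mathcal{A}}s_2,\,l_1-l_2)=\hat\mu_1(s_1-s_2,\,l_1-l_2)\hat\mu_2(s_1-\widetilde{\mathcal{A}}s_2,\,l_1+l_2),\quad s_1,s_2\in\mathbb{R}^m,\ l_1,l_2\in L.
\end{equation*}
First I would read off the two marginals. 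Putting $l_1=l_2=0$ shows that $\hat\mu_j(s,0)$, the characteristic functions of the projections of $\mu_j$ to $\mathbb{R}^m$, satisfy the Heyde equation on $\mathbb{R}^m$ for the operator $\mathcal{A}$; since $-1$ is not an eigenvalue of $\mathcal{A}$, Lemma \ref{A} gives that these projections $\gamma_j$ are Gaussian, so each $\hat\mu_j(s,0)$ is a nonvanishing entire function of order at most $2$ in $s\in\mathbb{C}^m$. Putting $s_1=s_2=0$ shows that $\hat\mu_j(0,l)$ satisfy the Heyde equation on $L$ for the operator $-I$; by Lemma \ref{lem1} this is the symmetry of the conditional distribution of the $K$-component difference given its sum, so Lemma \ref{lem3} applied to $K$ (which has no elements of order $2$) yields that the two $K$-marginals coincide, say $\mu_1^K=\mu_2^K=\omega$. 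It then remains to prove the factorisation $\hat\mu_j(s,l)=\hat\mu_j(s,0)\hat\mu_j(0,l)$, which gives exactly $\mu_j=\gamma_j*\omega$.

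To obtain the factorisation I would exploit the analytic structure. By Lemma \ref{lem5}, for each fixed $l$ the function $\hat\mu_j(s,l)$ extends to an entire function of $s\in\mathbb{C}^m$ with $\max_{s\in B_r}|\hat\mu_j(s,l)|\le\max_{s\in B_r}|\hat\mu_j(s,0)|=\exp\{O(r^2)\}$, so $\hat\mu_j(\cdot,l)$ has order at most $2$. Applying the finite-difference method to the logarithm in the $\mathbb{R}^m$-directions (as in the computation leading to (\ref{18.04.7}) in the proof of Theorem \ref{th3}, except that here the relevant continuous polynomials on $\mathbb{R}^m$ are genuine quadratics rather than constants) together with the growth bound, I would conclude that for each $l$ the logarithm $\log\hat\mu_j(s,l)$ is a polynomial of degree at most $2$ in $s$; equivalently
\begin{equation*}
\hat\mu_j(s,l)=\exp\{\langle A_j(l)s,s\rangle+\langle b_j(l),s\rangle+c_j(l)\},
\end{equation*}
with $A_j(l)$ symmetric and $A_j(l)$, $b_j(l)$ continuous in $l$. (In particular this shows $\hat\mu_j(\cdot,l)$ is zero-free on $\mathbb{C}^m$, which disposes of the branch question.)

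Next I would substitute these expressions into the Heyde equation, continued to complex $s_1,s_2$, and compare the quadratic and linear terms in $(s_1,s_2)$; these comparisons are unaffected by the additive $2\pi i$-ambiguity that only enters the constant term. Setting $l_2=0$ in the cross-term comparison yields $A_1(l)=-A_2(l)\widetilde{\mathcal{A}}$ and a matching linear identity, while the pure $s_1$-term gives that $A_1(l)-A_2(l)$ and $b_1(l)-b_2(l)$ take equal values at $l_1+l_2$ and $l_1-l_2$. Combining these and using that $I+\widetilde{\mathcal{A}}$ is invertible (again because $-1$ is not an eigenvalue of $\mathcal{A}$), I would deduce $A_2(l_1+l_2)=A_2(l_1-l_2)$, hence $A_2(2l)=A_2(0)$ for every $l$, and similarly for $b_2$. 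Since $K$ has no elements of order $2$, Lemma \ref{lem9} makes $L^{(2)}$ dense in $L$, so continuity forces $A_j(l)\equiv A_j(0)$ and $b_j(l)\equiv b_j(0)$. Then $\log\hat\mu_j(s,l)=\langle A_j(0)s,s\rangle+\langle b_j(0),s\rangle+c_j(l)$, i.e. $\hat\mu_j(s,l)=\hat\mu_j(s,0)\hat\mu_j(0,l)$, and with the marginals identified above this gives $\mu_j=\gamma_j*\omega$ with $\gamma_j$ Gaussian and $\omega$ supported in $K$.

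I expect the main obstacle to be this coefficient-comparison step: one must show that the quadratic and linear coefficients of $\log\hat\mu_j(\cdot,l)$ are independent of $l$, and it is precisely here that both hypotheses enter — the absence of the eigenvalue $-1$ of $\mathcal{A}$ through the invertibility of $I+\widetilde{\mathcal{A}}$, and the absence of elements of order $2$ in $K$ through the density of $L^{(2)}$ (Lemma \ref{lem9}). A secondary technical point, which must be handled carefully, is the passage from the modulus estimate of Lemma \ref{lem5} to the polynomiality of the logarithm (and thereby to the zero-freeness of $\hat\mu_j(\cdot,l)$ on $\mathbb{C}^m$), for which the finite-difference argument on the real group, combined with analytic continuation in $s$, is the natural tool.
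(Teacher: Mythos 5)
Your skeleton coincides with the paper's: reduce to equation (\ref{14.04.2}); identify the $\mathbb{R}^m$-marginals as Gaussian via Lemma \ref{A}; extend to entire functions of $s$ by Lemma \ref{lem5}; obtain the representation $\hat\mu_j(s,l)=\exp\{\langle A_j(l)s,s\rangle+\langle b_j(l),s\rangle+c_j(l)\}$; and then show by coefficient comparison, using the invertibility of $I+\widetilde{\mathcal{A}}$ and the density of $L^{(2)}$, that $A_j(l)$ and $b_j(l)$ do not depend on $l$. Your coefficient comparison and your identification of the $K$-marginals (via Lemma \ref{lem3} rather than the paper's direct substitution of $s_1=s_2=0$ into the reduced equation) are correct. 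The gap is in the pivotal step, namely the justification of the exponential-of-quadratic representation. You propose to get it by the finite-difference elimination ``as in the computation leading to (\ref{18.04.7})''. That elimination does not deliver what you need here, for two reasons. First, in order to remove the term carrying the argument $s_1+\widetilde{\mathcal{A}}s_2$ (respectively $s_1-\widetilde{\mathcal{A}}s_2$) from the four-term equation, the admissible shift of $(s_1,s_2)$ necessarily moves the argument $s_1+s_2$ by an element of $\mathrm{Im}(I-\widetilde{\mathcal{A}})$ (compare the increment $(I-\widetilde\alpha)^2h_1$ in (\ref{18.04.5})); the hypothesis excludes only $-1$ as an eigenvalue of $\mathcal{A}$, so $I-\widetilde{\mathcal{A}}$ may fail to be surjective (for instance $\mathcal{A}=I$ is allowed), and then the difference relation you obtain says nothing in the missing directions. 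Second, the computation leading to (\ref{18.04.7}) isolates only the combinations $A(u)$ and $B(v)$, not the logarithms of the individual characteristic functions; in the proof of Theorem \ref{th3} the disentangling works because $A\equiv 0$ and both summands are nonpositive, a trick with no analogue here, where the marginals are genuine Gaussians.

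The paper's mechanism at this point is different and is exactly the ingredient you are missing: it first proves that $\hat\mu_j(s,l)\ne 0$ for all $s\in\mathbb{C}^m$, $l\in L$, by substituting $s_1=\widetilde{\mathcal{A}}(\widetilde{\mathcal{A}}+I)^{-1}s_0$, $s_2=(\widetilde{\mathcal{A}}+I)^{-1}s_0$ into the complexified equation (\ref{14.04.4}) --- this uses only the invertibility of $I+\widetilde{\mathcal{A}}$ and the nonvanishing of $\hat\mu_j(0,l)$ --- and only then takes an entire logarithm and applies Hadamard's theorem together with the bound (\ref{16.05.6}) to conclude that the logarithm is a polynomial of degree at most $2$ in $s$. (Note also that on the real space the bound $|\hat\mu_j(s,l)|\le 1$ controls only $\mathrm{Re}\log\hat\mu_j$; to bound the degree of the full complex logarithm you need the estimate on $\mathbb{C}^m$, i.e. essentially the Hadamard argument.) Your real-variable idea can be repaired without finite differences: put $l_1=l_2=l'$ in (\ref{14.04.2}), move the known quadratics $\log\hat\mu_1(s_1-s_2,0)$ and $\log\hat\mu_2(s_1+\widetilde{\mathcal{A}}s_2,0)$ to one side, and observe that $(s_1,s_2)\mapsto(s_1+s_2,\,s_1-\widetilde{\mathcal{A}}s_2)$ is a linear bijection of $\mathbb{R}^{2m}$ because $I+\widetilde{\mathcal{A}}$ is invertible; the resulting two-term Pexider-type equation forces $\log\hat\mu_j(\cdot,2l')$ to be a quadratic polynomial, and the density of $L^{(2)}$ (Lemma \ref{lem9}) together with continuity finishes the step. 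As written, however, this step of your argument is not justified.
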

\begin{proof}  Denote by $L$ the character group of the group  $K$ and by 
 $y=(s, l)$, where $s\in \mathbb{R}^m$, $l\in L$, elements of 
the group $Y$.   By Lemma \ref{lem1}, the characteristic functions   
$\hat\mu_j(s, l)$   satisfy equation (\ref{11.04.1}) which takes the form
\begin{multline}\label{14.04.2}
\hat\mu_1(s_1+s_2, l_1+l_2)\hat\mu_2(s_1+\widetilde{\mathcal{A}} s_2, l_1- l_2)\\=
\hat\mu_1(s_1-s_2, l_1-l_2)\hat\mu_2(s_1-\widetilde{\mathcal{A}} s_2, l_1+l_2), 
\quad  s_j\in \mathbb{R}^m, \ l_j\in L.
\end{multline}
Substituting  $l_1=l_2=0$ in equation (\ref{14.04.2})  we obtain
\begin{equation}\label{25.07.1}
\hat\mu_1(s_1+s_2, 0)\hat\mu_2(s_1+\widetilde{\mathcal{A}} s_2, 0)=
\hat\mu_1(s_1-s_2, 0)\hat\mu_2(s_1-\widetilde{\mathcal{A}} s_2, 0), \quad  
s_j\in \mathbb{R}^m.
\end{equation}
Taking into account Lemmas \ref{lem1} and \ref{A}, equation 
(\ref{25.07.1}) implies that
$\hat\mu_j(s, 0)$, $j=1, 2$, are the characteristic functions of Gaussian 
distributions in the space $\mathbb{R}^m$, 
i.e.,
\begin{equation}\label{16.05.4}
\hat\mu_j(s, 0)=\exp\{-\langle A_j s, s\rangle+i\langle b_j, s\rangle\}, 
\quad s\in \mathbb{R}^m,
\end{equation}
where $A_j$ are   symmetric positive semidefinite $m\times m$ matrices, 
$b_j\in \mathbb{R}^m$, $j=1, 2$. Substituting (\ref{16.05.4}) into  
(\ref{25.07.1}) we obtain that $b_1+\mathcal{A}b_2=0$. This implies that
\begin{equation}\label{116.05.4}
\exp\{i\langle b_1, s_1+s_2\rangle\}
\exp\{i\langle b_2, s_1+\widetilde{\mathcal{A}} s_2\rangle\}
=
\exp\{i\langle b_1, s_1-s_2\rangle\}
\exp\{i\langle b_2, s_1-\widetilde{\mathcal{A}} s_2\rangle\}, \quad  
s_j\in \mathbb{R}^m.
\end{equation}
It follows from Lemma  \ref{lem1} and (\ref{116.05.4}) that we can
 replace the distributions $\mu_j$ with their shifts $\mu_j*E_{-b_j}$ and 
 suppose that $b_1=b_2=0$ in (\ref{16.05.4}), 
i.e.,
\begin{equation}\label{16.05.5}
\hat\mu_j(s, 0)=\exp\{-\langle A_j s, s\rangle\}, \quad s\in \mathbb{R}^m, 
\quad j=1, 2.
\end{equation}
By Lemma \ref{lem5},  for each fixed $l\in L$ the functions $\hat\mu_j(s, l)$ 
are extended to    $\mathbb{C}^m$  as  entire functions in $s$. It is easy to 
see that equation   (\ref{14.04.2}) is valid for all $s_j\in \mathbb{C}^m$, $l_j\in L$.

Let us check that the functions $\hat\mu_j(s, l)$ do not vanish 
in  $\mathbb{C}^m\times L$. Consider equation (\ref{14.04.2}) for 
$s_j\in \mathbb{C}^m$, $l_j\in L$, and put   $l_1=l_2=l$ 
in it.   Taking into account (\ref{16.05.5}), we rewrite the resulting equation in 
the form
\begin{multline} \label{114.04.4}
\hat\mu_1(s_1+s_2, 2l)\exp\{-\langle A_2 (s_1+\widetilde{\mathcal{A}} s_2),  
s_1+\widetilde{\mathcal{A}} s_2 \rangle\}\\=
\exp\{-\langle A_1 (s_1- s_2),  s_1-  s_2  \rangle\}
\hat\mu_2(s_1-\widetilde{\mathcal{A}} s_2, 2l), \quad  s_j\in \mathbb{C}^m, \ l\in L.
\end{multline}
It follows from Lemma \ref{lem5} that the functions $\hat\mu_j(s, l)$, $j=1, 2$,
are continuous on  $\mathbb{C}^m\times L$. Since the group $K$ contains no 
elements of order 2, Lemma  \ref{lem9} implies that the subgroup $L^{(2)}$ is dense in $L$.
Hence from equation (\ref{114.04.4}) we find
\begin{multline}\label{14.04.4}
\hat\mu_1(s_1+s_2, l)\exp\{-\langle A_2 (s_1+\widetilde{\mathcal{A}} s_2),  s_1+\widetilde{\mathcal{A}} s_2 \rangle\}\\=
\exp\{-\langle A_1 (s_1- s_2),  s_1-s_2 \rangle\}
\hat\mu_2(s_1-\widetilde{\mathcal{A}} s_2, l), \quad  s_j\in \mathbb{C}^m, \ l\in L.
\end{multline}

Suppose that  $\hat\mu_1(s_0, l_0)=0$ for some $s_0\in \mathbb{C}^m$, $l_0\in L$. 
Since 
the number $-1$ is not an eigenvalue of the operator    $\mathcal{A}$, the operator 
$\mathcal{A}+I$ is invertible. Hence  the operator $\widetilde{\mathcal{A}}+I$ 
is also invertible. Substitute   
$s_1=\widetilde{\mathcal{A}}(\widetilde{\mathcal{A}}+I)^{-1}s_0$, 
$s_2=(\widetilde{\mathcal{A}}+I)^{-1}s_0$, and 
$l=l_0$ in equation (\ref{14.04.4}). We have $s_1+s_2=s_0$ and  
$s_1-\widetilde{\mathcal{A}} s_2=0$.
Since $\hat\mu_2(0, l_0)\ne 0$,  the right-hand 
side in the resulting expression is nonzero, whereas the left-hand side 
is equal to the zero. This contradiction implies that the function 
$\hat\mu_1(s, l)$ does not vanish in $\mathbb{C}^m\times L$.   Similarly, if 
$\hat\mu_2(s_0, l_0)=0$ for some $s_0\in \mathbb{C}^m$, $l_0\in L$, then substituting  $s_1=(\widetilde{\mathcal{A}}+I)^{-1}s_0$, 
$s_2=-(\widetilde{\mathcal{A}}+I)^{-1}s_0$, and $l_0\in L$ in 
equation (\ref{14.04.4})  
and taking into account that $s_1+s_2=0$,  $s_1-\widetilde{\mathcal{A}} s_2=s_0$,  
and $\hat\mu_1(0, l_0)\ne 0$, we get a contradiction. Hence the 
function $\hat\mu_2(s, l)$ also does not 
vanish in $\mathbb{C}^m\times L$. Thus, we have the representation
$$
\hat\mu_j(s, l)=\exp\{Q_j(s, l)\}, \quad  s  \in \mathbb{C}^m, \ l\in L,
$$
where for a fixed   $l\in L$ each of the functions
 $Q_j(s, l)$, $j=1, 2$, is entire in   $s$ in $\mathbb{C}^m$.
 
Fix $l\in L$ and consider the restriction of the function  
$Q_j(s, l)$, $j=1, 2$, to any complex plane in $\mathbb{C}^m$ passing through 
the zero. 
 Taking into account Hadamard's theorem on the representation of entire 
 functions of finite order and  Lemma \ref{lem5},  it follows from
  (\ref{16.05.6})
and (\ref{16.05.5}) that this restriction is a polynomial of degree at most 2.
 Hence  the function  
$Q_j(s, l)$ is a polynomial  in coordinate of 
$s$ of degree at most 2. Thus, we have the representation
\begin{equation}\label{16.05.7}
\hat\mu_j(s,l)=\exp\{\langle A_j(l) s, s\rangle+\langle b_j(l), s\rangle+c_j(l)\}, 
\quad  s\in \mathbb{C}^m, \ l\in L,
\end{equation}
where $A_j(l)$ is a    symmetric complex $m\times m$ matrix, 
$b_j(l)\in \mathbb{C}^m$, $c_j(l)\in \mathbb{C}$,  $j=1, 2$. 
We will prove that $A_j(l)=A_j$ and $b_j(l)=0$ for all $l\in L$, $j=1, 2$. 

Substitute  
$l_1=l_2=l$ in equation (\ref{14.04.2}). 
Since the subgroup $L^{(2)}$ is dense in $L$, from the resulting equation we get
\begin{equation}\label{n14.04.2}
\hat\mu_1(s_1+s_2, l)\hat\mu_2(s_1+\widetilde{\mathcal{A}} s_2, 0)\\=
\hat\mu_1(s_1-s_2, 0)\hat\mu_2(s_1-\widetilde{\mathcal{A}} s_2, l), 
\quad  s_j\in \mathbb{R}^m, \ l\in L.
\end{equation}
Substituting (\ref{16.05.7}) into equation (\ref{n14.04.2}), 
we find
\begin{multline}\label{16.05.8}
\langle A_1(l)(s_1+s_2), s_1+s_2\rangle+\langle A_2(0)(s_1+
\widetilde{\mathcal{A}} s_2), s_1+\widetilde{\mathcal{A}} s_2\rangle\\
=\langle A_1(0)(s_1-s_2), s_1-s_2\rangle+
\langle A_2(l)(s_1-\widetilde{\mathcal{A}} s_2), 
s_1-\widetilde{\mathcal{A}} s_2\rangle,\quad    
s_1, s_2\in \mathbb{R}^m, \ l\in L,
\end{multline}
and
\begin{equation}\label{17.05.3}
\langle b_1(l), s_1+s_2\rangle+\langle b_2(0), 
s_1+\widetilde{\mathcal{A}}s_2\rangle=
\langle b_1(0), s_1-s_2\rangle+\langle b_2(l), 
s_1-\widetilde{\mathcal{A}}s_2\rangle,\quad s_1, s_2\in \mathbb{R}^m, \ l\in L.
\end{equation}

Equation  (\ref{16.05.8}) implies the equalities
\begin{equation}\label{17.05.2}
A_1(l)+A_2(0)=A_1(0)+A_2(l),   \quad l\in L,
\end{equation}
 and
\begin{equation}\label{16.05.9}
A_1(l)+\mathcal{A}A_2(0)\widetilde{\mathcal{A}}=
A_1(0)+\mathcal{A}A_2(l)\widetilde{\mathcal{A}},   \quad l\in L.
\end{equation}
Substituting  $l_1=l$,  $l_2=0$ in equation (\ref{14.04.2})  and taking 
into account (\ref{16.05.7}), from the obtained equation we find
\begin{multline}\label{16.05.10}
\langle A_1(l)(s_1+s_2), s_1+s_2\rangle+\langle A_2(l)
(s_1+\widetilde{\mathcal{A}}s_2), s_1+\widetilde{\mathcal{A}}s_2\rangle\\
=\langle A_1(l)(s_1-s_2), s_1-s_2\rangle+\langle A_2(l)
(s_1-\widetilde{\mathcal{A}}s_2), s_1-\widetilde{\mathcal{A}}s_2\rangle,\quad    
s_1, s_2\in \mathbb{R}^m,   
\ l\in L.
\end{multline}
From (\ref{16.05.10}) we get
\begin{equation}\label{16.05.11}
A_1(l)+\mathcal{A}A_2(l)=0,   \quad l\in L.
\end{equation}
We find from (\ref{16.05.5}) and (\ref{16.05.7}) that $A_j(0)=-A_j$, $j=1, 2$. 
Since the operator $\widetilde{\mathcal{A}}+I$ is invertible,    
 (\ref{16.05.9}) and (\ref{16.05.11}) imply that $A_1(l)=-A_1$,   $l\in L$, 
 and we obtain from  (\ref{17.05.2}) that $A_2(l)=-A_2$,   $l\in L$.
Thus, we proved that
\begin{equation}\label{17.05.5}
A_1(l)=-A_1, \quad A_2(l)=-A_2, \quad l\in L.
\end{equation}

It follows from (\ref{17.05.3}) that
\begin{equation}\label{17.05.4}
b_1(l)+b_2(0)=b_1(0)+b_2(l), \quad b_1(l)+ \mathcal{A} b_2(0)=-b_1(0)- 
\mathcal{A} b_2(l), \quad l\in L.
\end{equation}
 Observe that (\ref{16.05.5}) and (\ref{16.05.7}) imply $b_1(0)=b_2(0)=0$. 
 Taking this into account and taking into account that the operator    
 $\mathcal{A}+I$   is invertible, (\ref{17.05.4})  implies that
 \begin{equation}\label{10.05.1}
b_1(l)=b_2(l)=0, \quad l\in L.
\end{equation}

In view of (\ref{17.05.5}) and (\ref{10.05.1}), we get from
(\ref{16.05.7}) that the characteristic functions $\hat\mu_j(s, l)$ can be 
represented in the form
\begin{equation}\label{20.05.3}
\hat\mu_j(s, l)=\exp\{-\langle A_j s, s\rangle+c_j(l)\}, \quad  
s\in \mathbb{R}^m, \ l\in L, \   j=1, 2.
\end{equation}

Substituting      $s_1=s_2=0$ in equation  (\ref{n14.04.2}),    we get   
\begin{equation}\label{nn17.05.7}
\hat\mu_1(0,l)=\hat\mu_2(0,l), \quad l\in L.
\end{equation}
Substituting $s=0$ into (\ref{20.05.3}) and taking into account 
(\ref{nn17.05.7}), we obtain
\begin{equation}\label{17.05.7}
\hat\mu_1(0,l)=\hat\mu_2(0,l)=\exp\{c_1(l)\}=\exp\{c_2(l)\},\quad  l\in L.
\end{equation}
Denote by $\gamma_j$ a symmetric Gaussian distribution in the space 
$\mathbb{R}^m$ 
with the characteristic function
\begin{equation}\label{19.05.2}
\hat\gamma_j(s)=\exp\{-\langle A_js, s\rangle\}, \quad s\in \mathbb{R}^m, 
\ j=1, 2.
\end{equation}
In view of  (\ref{nn17.05.7}), denote by $\omega$  
a distribution on the group    $K$ with the characteristic function
\begin{equation}\label{20.05.5}
\hat\omega(l)=\hat\mu_1(0,l)=\hat\mu_2(0,l), \quad l\in L.
\end{equation}
The representation
$$
\hat\mu_j(s, l)=\hat\gamma_j(s)\hat\omega(l), \quad s\in 
\mathbb{R}^m, \ l\in L, \ j=1, 2,
$$
follows from   (\ref{20.05.3}) and (\ref{17.05.7})--(\ref{20.05.5}). 
This implies that    
$\mu_j=\gamma_j*\omega$,  $j=1, 2$. The lemma is proved.  \end{proof}

\textit{Proof of Theorem $\ref{th2}$}  Let  $\alpha=(\mathcal{A}, \alpha_{G})$.   
Put $$K_1=K\cap \mathbb{R}^n=\mathrm{Ker}(I+\mathcal{A}), \quad 
K_2=K\cap G=\mathrm{Ker}(I+\alpha_{G}).$$  It is obvious that $K=K_1\times K_2$.  
By Lemma \ref{lem1}, the characteristic functions $\hat\mu_j(s, h)$   
satisfy equation (\ref{11.04.1})  which takes the form
\begin{multline}\label{13.04.2}
\hat\mu_1(s_1+s_2, h_1+h_2)\hat\mu_2(s_1+\widetilde{\mathcal{A}} s_2, 
h_1+\widetilde\alpha_{G} h_2)\\=
\hat\mu_1(s_1-s_2, h_1-h_2)\hat\mu_2(s_1-\widetilde{\mathcal{A}} s_2, 
h_1-\widetilde\alpha_{G} h_2), \quad  s_j\in \mathbb{R}^n, \ h_j\in H.
\end{multline}
Denote by $\pi_j$ a distribution in the space   $\mathbb{R}^n$ with 
the characteristic function  $\hat\pi_j(s)=\hat\mu_j(s, 0)$, 
$s\in \mathbb{R}^n$, $j=1, 2$.
Setting $h_1=h_2=0$ in equation (\ref{13.04.2}), we get that the characteristic 
functions  $\hat\pi_j(s)$ satisfy the equation
\begin{equation}\label{26.07.1}
\hat\pi_1(s_1+s_2)\hat\pi_2(s_1+\widetilde{\mathcal{A}} s_2)=
\hat\pi_1(s_1-s_2)\hat\pi_2(s_1-\widetilde{\mathcal{A}} s_2), \quad  
s_j\in \mathbb{R}^n.
\end{equation}
In view of Lemma   \ref{lem1} and (\ref{26.07.1}), it follows from 
Lemma \ref{le25.1} that there exist  an  $\mathcal{A}$-invariant subspace   
$F$ 
of $\mathbb{R}^n$ satisfying the condition  $F\cap K_1=\{0\}$   and vectors
$t_j\in \mathbb{R}^n$ such that the distributions  $\rho_j=\pi_j*E_{-t_j}$ 
are supported in the subspace  $F\times K_1$. Moreover, if $\zeta_j$ are 
independent random vectors with values in the space $\mathbb{R}^n$ and
 distributions  $\rho_j$, then the conditional distribution of the linear form
$P_2 = \zeta_1 + \mathcal{A} \zeta_2$ given
  $P_1 = \zeta_1 + \zeta_2$  is symmetric.

Denote by $\omega_j$ a distribution on the group $G$ with the characteristic 
function $\hat\omega_j(h)=\hat\mu_j(0, h)$,  $h\in H$, $j=1, 2$. Substituting 
$s_1=s_2=0$ in (\ref{13.04.2}), we obtain that  the characteristic functions 
$\hat\omega_j(h)$ satisfy the equation
\begin{equation}\label{14.04.1}
\hat\omega_1(h_1+h_2)\hat\omega_2(h_1+\widetilde\alpha_{G} h_2)=
\hat\omega_1(h_1-h_2)\hat\omega_2(h_1-\widetilde\alpha_{G} h_2), \quad    
h_j\in H.
\end{equation}
Taking into account Lemma   \ref{lem1} and (\ref{14.04.1}), it follows from 
Theorem \ref{th3}  applying to the group  $G$, that there exist  elements
$g_j\in G$ such that the distribution   $\omega=\omega_1*E_{-g_1}=\omega_2*E_{-g_2}$ 
is supported in the subgroup $K_2$. Moreover,  if $\tau_j$  are independent 
identically distributed random variables with values in   $G$  and distribution  
$\omega$, then    the conditional  distribution of the linear form 
$Q_2 = \tau_1 +\alpha_{G}\tau_2$ given $Q_1 = \tau_1 + \tau_2$ is symmetric.

Consider the distributions $\lambda_j=\mu_j*E_{-t_j-g_j}$, $j=1, 2$. Let $\eta_j$ 
be
independent random variables with values in   $X$ and distributions   $\lambda_j$. 
From the above it follows that   the conditional  distribution of the linear form
$N_2=\eta_1 + \alpha\eta_2$ given $N_1=\eta_1 + \eta_2$    is symmetric  and   
$\sigma(\lambda_j)\subset F\times K_1\times K_2=F\times K$, $j=1, 2$. Therefore 
we can consider $\eta_j$ as
 independent random variables with values in the subgroup $F\times K$. In view 
 of  $K=\mathrm{Ker}(I+\alpha)$, the restriction of the topological automorphism   
 $\alpha$  
 to the subgroup $K$ coincides with $-I$. Hence  the restriction of the topological 
 automorphism  $\alpha$ to the subgroup $F\times K$ is of the form 
 $(\mathcal{A}_{F}, -I)$. Obviously, the   group     $K$ contains no elements 
 of order 2. 
It follows from $F\cap K_1=\{0\}$ that the number $-1$ is not an 
eigenvalue of the operator $\mathcal{A}_{F}$.  
 The statement of the theorem  follows from Lemma \ref{lem10}  
 applying to the group $F\times K$. 
 \hfill$\Box$

\section{Heyde theorem  for some locally compact Abelian groups containing 
an element of order 2}

Denote by $\mathbb{Z}(2)=\{0, 1\}$ the   group  of the integers 
modulo $2$. Consider  the group   
$\mathbb{R}\times \mathbb{Z}(2)$ and denote  by 
$(t,   m)$, where $t\in \mathbb{R}$,   $m\in \mathbb{Z}(2)$, its elements.
Elements of the character group   of the group $\mathbb{R}\times \mathbb{Z}(2)$,
which is topologically isomorphic 
 to $\mathbb{R}\times \mathbb{Z}(2)$,  
 we denote  by $(s, n)$, where $s\in \mathbb{R}$,   $n\in \mathbb{Z}(2)$. 
Let $\alpha\in \mathrm{Aut}(\mathbb{R}\times \mathbb{Z}(2))$. 
It is obvious that
 $\alpha$ is of the form
 $\alpha(t, m)=(a t, m)$, $t\in \mathbb{R}$,   $m\in \mathbb{Z}(2)$,  
 where $a\in \mathbb{R}$, $a\ne 0$.

Let $\sigma\ge 0$,  $\sigma'\ge 0$ and  
$\beta, \beta', \varkappa\in \mathbb{R}$. 
According to \cite{JMAA2024}, see also \cite[Definition 11.2]{Febooknew}, we say that  a probability distribution $\mu$ on the group $\mathbb{R}\times \mathbb{Z}(2)$  
belongs to the class $\Theta$  if the characteristic function $\hat\mu(s, n)$, $s\in \mathbb{R}$,   
$n\in \mathbb{Z}(2)$,
is represented in the form 
\begin{equation*}\label{21.01.1}  
\hat\mu(s, n) = \begin{cases}\exp\{-\sigma s^2+i\beta s\}  &\text{\ if\ }\ 
\ s\in \mathbb{R}, 
\ n=0,\\
\varkappa\exp\{-\sigma' s^2+i\beta's\}  &\text{\ if\ }\ \ s\in \mathbb{R}, 
\   n=1,
\\
\end{cases}
\end{equation*}
  where either \begin{equation*}\label{08.01.8}
0<\sigma'<\sigma,  \quad
0<|\varkappa|\le\sqrt\frac{\sigma'}{\sigma}\exp\left\{-\frac{(\beta-\beta')^2}
{4(\sigma-\sigma')}\right\} 
\end{equation*}
or
\begin{equation*}\label{08.01.9}
\sigma=\sigma', \quad\beta=\beta', \quad  |\varkappa|\le 1.
\end{equation*} are satisfied. 

We note that the class $\Theta$ is a semigroup with respect to convolution.
Arithmetic of the semigroup $\Theta$, namely, 
the class of infinitely divisible distributions, 
the class  of indecomposable distributions, and the class  of 
distributions which  have no indecomposable factors, was studied in \cite{JMPAG2024}.

Recall that for a torsion Abelian group $F$ and a prime $p$ the subgroup consisting 
of all elements of $F$ whose order is a power of $p$ is called the $p$-component of $F$.
Consider a group $X$ of the form $X=\mathbb{R}\times F$,
where $F$ is a discrete torsion Abelian group such that its 2-component is isomorphic to 
$\mathbb{Z}(2)$.
Since a torsion Abelian group is isomorphic to a weak direct product of
its $p$-components (\!\!\cite[(A.3)]{Hewitt-Ross}), the group $F$ is topologically 
isomorphic to a group of the form $\mathbb{Z}(2)\times G$, 
where $G$ is a discrete torsion Abelian group containing no elements of order 2. 
So, we may  assume, without loss of generality, that $X=\mathbb{R}\times \mathbb{Z}(2)\times G$.
Obviously, the subgroups $\mathbb{R}$, $\mathbb{Z}(2)$, and $G$ are characteristic.
Let $\alpha$ be a topological automorphism of 
the group $X$. 
Then $\alpha$ acts on the elements of the group $X$ as follows: 
$\alpha(t, m, g)=(at, m, \alpha_Gg)$, $t\in \mathbb{R}$, $m\in\mathbb{Z}(2)$, 
$g\in G$, where $a\in \mathbb{R}$, $a\ne 0$, and we will write $\alpha=(a, I, \alpha_G)$.

The following theorem was proved in \cite{JMAA2024}.

\begin{heyde2*}Assume that $X=\mathbb{R}\times \mathbb{Z}(2)\times G$, 
where $G$ is a finite Abelian group containing no elements of order  $2$.
Let  $\alpha$ be a topological automorphism of 
the group $X$. Then $\alpha$ can be written in the form
$\alpha=(a, I, \alpha_G)$. Set $K=\mathrm{Ker}(I+\alpha_{G})$.
Let $\xi_j$, $j=1, 2$, be
independent random variables with values in   $X$ and distributions
$\mu_j$ with nonvanishing characteristic functions. Assume that the conditional  distribution of the linear form $L_2 = \xi_1 + \alpha\xi_2$ given $L_1 = \xi_1 + \xi_2$ is symmetric. Then the following statements are true.
\renewcommand{\labelenumi}{\rm{\Roman{enumi}.}}
\begin{enumerate}
  
\item	

If  $a \ne-1$, then $\mu_j=\gamma_j*\omega_j*E_{g_j}$, where $\gamma_j$ are distributions of the class $\Theta$ on the subgroup   
$\mathbb{R}\times \mathbb{Z}(2)$, $\omega_j$ are distributions on the subgroup    
$\mathbb{Z}(2)\times K$ and either 
$\omega_1=\omega_2*\vartheta_2$ or   $\omega_2=\omega_1*\vartheta_1$, 
 where  $\vartheta_j$ are distributions on $\mathbb{Z}(2)$ and $g_j\in G$. 

\item

If  $a=-1$, then 
$\mu_j=\omega_j*E_{x_j}$, where  $\omega_j$ are distributions on  
the subgroup $\mathbb{R}\times\mathbb{Z}(2)\times K$  and either 
$\omega_1=\omega_2*\vartheta_2$ or   $\omega_2=\omega_1*\vartheta_1$, 
 where  $\vartheta_j$ are distributions on $\mathbb{Z}(2)$ and $g_j\in G$. 
\end{enumerate}
\end{heyde2*}

The proof of Theorem B is based on Theorem A
for a finite Abelian group $G$ containing no elements of order  $2$. 
Moreover, an analysis of 
the proof of Theorem B shows that it is true 
if $X=\mathbb{R}\times \mathbb{Z}(2)\times G$,
where $G$ is a 
discrete Abelian group containing no elements of order 2, 
provided that Theorem A holds for $G$ and $G$ is a characteristic 
subgroup of $X$.

Let $G$ be a discrete torsion Abelian group 
containing no elements of order $2$. By Theorem \ref{th3}, 
Theorem A holds for $G$. It is obvious that $G$ is a characteristic 
subgroup of $X$. This implies
the following statement.

\begin{theorem}
Theorem $\mathrm{B}$ remains valid if $G$ is a discrete torsion Abelian group 
containing no elements of order $2$. 
\end{theorem}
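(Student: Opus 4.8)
The plan is to leverage the remark already made in the excerpt: the author states that an analysis of the proof of Theorem B shows it goes through whenever (i) Theorem A holds for the group $G$ and (ii) $G$ is a characteristic subgroup of $X = \mathbb{R}\times\mathbb{Z}(2)\times G$. So the entire task reduces to verifying these two hypotheses for the new, broader class of $G$ — a discrete torsion Abelian group containing no elements of order $2$ — and then invoking the conditional form of Theorem B verbatim.

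\medskip

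First I would check hypothesis (i). The earlier Theorem \ref{th3} establishes the Heyde characterization for every totally disconnected locally compact Abelian group containing no elements of order $2$. A discrete group is automatically totally disconnected (its connected component of zero is $\{0\}$), and it is locally compact. Hence a discrete torsion Abelian group $G$ with no elements of order $2$ falls under Theorem \ref{th3}, which is precisely the generalization of Theorem A. Thus Theorem A — in the form used in the proof of Theorem B — holds for this $G$.

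\medskip

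Next I would verify hypothesis (ii), that $G$ is a characteristic subgroup of $X=\mathbb{R}\times\mathbb{Z}(2)\times G$. This is the structural step and is where I expect the only real (though mild) obstacle to lie: one must argue that any topological automorphism $\alpha$ of $X$ preserves the summand $G$. The natural route is to identify $G$ intrinsically. Since $\mathbb{R}$ is the connected component of zero in $X$, every $\alpha\in\mathrm{Aut}(X)$ satisfies $\alpha(\mathbb{R})=\mathbb{R}$. The complementary discrete torsion part $\mathbb{Z}(2)\times G$ is the set of compact elements of $X$ (equivalently, the torsion subgroup together with the connected component), which is again characteristic. Within this discrete torsion group, $G$ is exactly the subgroup of elements whose order is odd — the direct sum of all $p$-components for odd primes $p$ — and $\mathbb{Z}(2)$ is the $2$-component; since automorphisms preserve order, they preserve each $p$-component, so both $\mathbb{Z}(2)$ and $G$ are characteristic in $X$. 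This is essentially the same observation already recorded in the excerpt for the finite case (``Obviously, the subgroups $\mathbb{R}$, $\mathbb{Z}(2)$, and $G$ are characteristic''), and it transfers to the torsion case without change because characteristicity is a purely algebraic/topological property independent of the finiteness of $G$.

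\medskip

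With both hypotheses confirmed, the conclusion follows immediately from the author's own meta-observation about the proof of Theorem B: the argument there uses $G$ only through Theorem A and the characteristicity of $G$, never through its finiteness. Therefore the decomposition $\alpha=(a,I,\alpha_G)$ and the case analysis on whether $a=-1$ carry over verbatim, yielding the stated representations of $\mu_j$ in cases I and II. I would close by noting that no new functional-equation work is required; the result is a clean corollary of Theorem \ref{th3} combined with the structural stability of the proof of Theorem B, so the proof consists precisely of the two verifications above followed by the citation of Theorem B in its conditional form.
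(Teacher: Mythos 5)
Your proposal is correct and follows essentially the same route as the paper: the author likewise reduces the statement to the two hypotheses of the conditional form of Theorem B, notes that Theorem A holds for $G$ by Theorem \ref{th3} (a discrete group being totally disconnected and locally compact), and observes that $G$ is a characteristic subgroup of $X$. Your explicit identification of $G$ as the odd-order part of the subgroup of compact elements merely spells out what the paper dismisses as obvious.
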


\section{Some problems}

Recall that a distribution  $\gamma$  on a locally compact Abelian 
group $X$ is called Gaussian
(see \cite[Chapter IV, \S 6]{Pa})
if its characteristic function has the form
$$
\hat\gamma(y)= (x,y)\exp\{-\varphi(y)\}, \quad  y\in Y,
$$
where $x \in X$, and $\varphi(y)$ is a continuous nonnegative function
on the group $Y$
 satisfying the equation
 $$
\varphi(u + v) + \varphi(u
- v) = 2[\varphi(u) + \varphi(v)], \quad u,  v \in
Y.
$$
In particular, all degenerate distributions are Gaussian. Note that 
the support of  any Gaussian distribution is a coset of a connected 
subgroup of the group $X$. This implies that on a totally disconnected 
locally compact Abelian group Gaussian distributions are degenerated.

Let  $X$ be either a totally disconnected locally compact Abelian group 
containing no elements of order  $2$   or a group of the form 
$X=\mathbb{R}^n\times G$, 
where $G$ is a totally disconnected locally  compact Abelian group  
consisting of compact elements and containing no elements of order  
$2$. Let
  $\alpha$ be a topological automorphism of the group $X$. 
  Put $K=\mathrm{Ker}(I+\alpha)$. Let $\xi_1$ and $\xi_2$ be
independent random variables with values in   $X$ and distributions
$\mu_1$ and $\mu_2$ with nonvanishing  characteristic functions. 
Assume that the conditional  distribution of the 
linear form $L_2 = \xi_1 + \alpha\xi_2$ given $L_1 = \xi_1 + \xi_2$ 
is symmetric. Taking into account that the support of  
any Gaussian distribution is a coset of a connected 
subgroup of the group $X$, it follows  from  Theorems \ref{th3} and  \ref{th2} 
that then 
$\mu_j$  are   convolutions  of Gaussian 
distributions on  $X$ and a distribution supported in    $K$.
A similar result was proved in \cite{F_solenoid} in the case when 
$X$ is an arbitrary ${\bm a}$-adic   solenoid\footnote{The definition of an 
${\bm a}$-adic   solenoid and its properties see, e.g., 
\cite[(10.12), (10.13)]{Hewitt-Ross}.}
containing no elements of order  $2$.
 In view of the above, we formulate the following general problem.

\begin{problem}\label{p1} To describe locally compact Abelian groups $X$ 
containing no elements of order  $2$ for which the following statement is valid:

\textit{Let   $\alpha$ be a topological automorphism of the group   $X$. 
Put $K=\mathrm{Ker}(I+\alpha)$. Let $\xi_1$ and $\xi_2$ be
independent random variables with values in   $X$ and distributions
$\mu_1$ and $\mu_2$ with nonvanishing  characteristic functions. 
If   the conditional  distribution of the linear form $L_2 = \xi_1 + 
\alpha\xi_2$ given $L_1 = \xi_1 + \xi_2$ is symmetric, then
 $\mu_j$  are   convolutions  of Gaussian distributions on  $X$ and a 
 distribution supported in    $K$}.
 \end{problem}
 
Let us formulate some particular
  cases of Problem \ref{p1}.
 
\begin{problem}\label{p2} Does Theorem \ref{th2} remain true if 
we omit the condition: the group $G$ consists of compact elements.
 \end{problem}
 
 \begin{problem}\label{p3} Give an example of a locally compact Abelian 
 group $X$ containing no elements of order  $2$ for which the 
 following statement is true: 
 
 \textit{There exist independent random variables 
 $\xi_1$ and $\xi_2$ with values in $X$ and distributions
$\mu_1$ and $\mu_2$ with nonvanishing  characteristic functions and a 
 topological automorphism $\alpha$ of the group   $X$ such that 
  the conditional  distribution of the linear form $L_2 = \xi_1 + 
\alpha\xi_2$ given $L_1 = \xi_1 + \xi_2$ is symmetric while
 $\mu_j$  cannot be represented as
    convolutions  of Gaussian distributions on  $X$ and a 
 distribution supported in    $K$, where $K=\mathrm{Ker}(I+\alpha)$}.
 \end{problem}

\noindent\textbf{Funding} The author  declares that no funds or grants  
 were
received during the preparation of this manuscript.

\medskip

\noindent\textbf{Data Availability} No datasets were generated or analysed 
during the current study.

\bigskip

\noindent\textbf{\Large Declarations}

\bigskip

\noindent\textbf{Ethical Approval} Not applicable.

\medskip

\noindent\textbf{Competing interests} The author  declares no competing interests.

\medskip

\noindent B. Verkin Institute for Low Temperature Physics and Engineering\\
of the National Academy of Sciences of Ukraine\\
47, Nauky ave, Kharkiv, 61103, Ukraine

\medskip

\noindent e-mail:    gennadiy\_f@yahoo.co.uk


\begin{thebibliography}{99}

\bibitem{A1} J.M. Almira, Characterization of polynomials as solutions of certain functional equations, J. Math. Anal. Appl.
{459}  (2018)   1016--1028.

\bibitem{A2} J.M. Almira, Using Aichinger's equation to characterize polynomial 
functions, Aequationes Math. 97 (2023), 899--907.

\bibitem{AS1} J.M. Almira,  E. V. Shulman, On certain generalizations of the Levi--Civita 
and Wilson functional equations, Aequationes Math. 91 (2017),  921--931.

\bibitem{F_solenoid} {G.M. Feldman,} On a characterization theorem for
connected locally compact Abelian groups, J. Fourier Anal. Appl.  { 26}(14)  (2020) 
1--22. 
 

\bibitem{H1}{G.M. Feldman,} Generalization of the Heyde theorem to some 
locally compact Abelian groups, Results in Mathematics  {77}(177)  (2022)  1--20
(see also https://export.arxiv.org/pdf/2207.03014). 


\bibitem{Febooknew} {G.M. Feldman,}  {Characterization of probability distributions 
on locally compact Abelian groups}.  Mathematical Surveys and Monographs, {273}, 
American Mathematical Society, Providence, RI  2023.  

\bibitem{H2}   {G.M. Feldman,}  On a characterization theorem in the space $\mathbb{R}^n$,
J. Math. Anal. Appl.  {528}(127583)   (2023) 1--12. 

\bibitem{JMAA2024}   {G.M. Feldman,}  Characterization of probability distributions on some  
locally compact Abelian groups containing an element of order $2$,
J. Math. Anal. Appl.  {539}(128579)   (2024)  1--16. 

\bibitem{JMPAG2024} {G.M. Feldman,} Arithmetic  of a certain convolution semigroup 
of probability 
distributions on the group $\mathbb{R}\times \mathbb{Z}(2)$,  
J. Math. Phys. Anal. Geom. {20}  (2024) 450--462.

\bibitem{GhurO} S.G. Ghurye, I.  Olkin,   { A characterization of the
		multivariate normal distribution}. Ann. Math. Statist.  {33} 
	 (1962)  533--541. 

\bibitem{Hewitt-Ross} E. Hewitt, K.A. Ross,  {Abstract
   harmonic analysis}, vol. 1. Springer, Berlin
     1963. 
    
    \bibitem{He} C.C. Heyde,   {Characterization of the normal low by the
		symmetry of a certain conditional distribution}. Sankhya,
	Ser. A   {32}  (1970)     115--118.

\bibitem{KaLiRa} A.M. Kagan, Yu.V. Linnik, C.R.,
 {Characterization problems in mathematical statistics}.
Wiley Series in Probability and Mathematical
Statistics,  Wiley, New York  1973.

\bibitem{M2013} M.V. Myronyuk, The Heyde theorem on  ${\bm a}$-adic solenoids, 
Colloquium Mathematicum   {132}   (2013) 195--210.

\bibitem{Pa} K.R. Parthasarathy,  Probability measures
on metric spaces,    AMS Chelsea Publishing,  Providence  2005.

\bibitem{SaSh1} M. Sablik, E. Shulman, Exponential rationals,
Results Math. 79:35 (2024)  1--11.

\bibitem{Sh1} E.V. Shulman, On and around the balanced Cauchy equation, 
Aequationes Math. (2025)  https://doi.org/10.1007/s00010-025-01155-8.

\bibitem{St1} H. Stetkaer, Functional equations on abelian groups with 
involution, Aequationes Math.
54  (1997) 144--172.

\bibitem{St2} H. Stetkaer, The equation $f(xy)=f(x)h(y)+g(x)f(y)$ and representations on 
$\mathbb{C}^2$, Aequationes Math.
98 (2024)  1419--1438.

\bibitem{Sz1} L. Sz\'ekelyhidi, On Fr\'echet's functional equation,
Monatsh. Math. 175 (2014)    639--643. 

\end{thebibliography}
\end{document}